\newtheorem{theorem}{Theorem}
\newtheorem{proposition}[theorem]{Proposition}
\newtheorem{corollary}[theorem]{Corollary}
\newtheorem{lemma}[theorem]{Lemma}
\newtheorem{remark}[theorem]{Remark}
\newtheorem{example}[theorem]{Example}
\newtheorem{definition}[theorem]{Definition}
\newtheorem{con}[theorem]{Conjecture}
\numberwithin{equation}{section}
\numberwithin{theorem}{section}
\DeclareMathOperator{\diam}{diam}
\DeclareMathOperator{\id}{Id}
\DeclareMathOperator{\iso}{Iso}
\DeclareMathOperator{\kker}{Ker}
\DeclareMathOperator{\leng}{length}
\DeclareMathOperator{\genlin}{GL}
\DeclareMathOperator{\Adjoint}{Ad}
\DeclareMathOperator{\Tan}{T}
\DeclareMathOperator{\thi}{thick}
\DeclareMathOperator{\dimensi}{dim}
\begin{document}


\begin{center}
\large{Limits of almost homogeneous spaces and 

their fundamental groups}

\normalsize

Sergio Zamora

zamora@mpim-bonn.mpg.de
\end{center}

\begin{abstract}
We say that a sequence of proper geodesic spaces $X_n$ consists of \textit{almost homogeneous spaces} if there is a sequence of discrete groups of isometries $G_n \leq \iso(X_n)$ with $\diam (X_n/G_n)\to 0$ as $n \to \infty$.

We show that if a sequence $(X_n,p_n)$ of pointed almost homogeneous spaces converges in the pointed Gromov--Hausdorff sense to a space $(X,p)$, then $X$ is a nilpotent locally compact group equipped with an invariant geodesic metric. 

Under the above hypotheses, we show that if $X$ is semi-locally-simply-connected, then it is a nilpotent Lie group equipped with an invariant sub-Finsler metric, and for $n$ large enough,  $\pi_1(X) $ is a subgroup of a quotient of $ \pi_1(X_n) $.

\end{abstract}

\section{Introduction}\label{sec:intro}

We say that a sequence of proper geodesic spaces $X_n$ consists of \textit{almost homogeneous spaces} if there is a sequence of discrete groups of isometries $G_n \leq \iso(X_n)$ with $\diam (X_n/G_n)\to 0$ as $n \to \infty$.

\begin{remark}
\rm A sequence of homogeneous spaces $X_n$ does not necessarily consist of almost homogeneous spaces, since the groups $ \iso (X_n ) $ are not necessarily discrete.
\end{remark}

\begin{example}
 \rm Let $Z_n$ be a sequence of compact geodesic spaces with diam$(Z_n)\to 0$ as $n \to \infty$. If $\tilde{Z}_n \to Z_n$ is a sequence of regular covers, then the sequence $\tilde{Z}_n$ consists of almost homogeneous spaces. 
\end{example}

The goal of this paper is to understand the Gromov--Hausdorff limits of sequences of almost homogeneous spaces. In the case when the sequence consists of blow-downs of a single space, the problem was solved by Mikhail Gromov and Pierre Pansu \cite{GromovPG, Pansu}.

\begin{theorem}
\rm (Gromov--Pansu) Let $(X,p)$ be a pointed proper geodesic space, and $G \leq \iso (X)$ a discrete group of isometries with $\diam (X/G) < \infty$. If for some sequence of positive numbers $\lambda_n \to \infty$, one has
\[  \lim_{n \to \infty} \left( \dfrac{1}{\lambda_n}X, p \right)  =  (Y,q)   \]
in the pointed Gromov--Hausdorff sense, then $Y$ is a simply connected nilpotent Lie group equipped with a Carnot--Caratheodory metric (a Carnot--Caratheodory metric is a special kind of invariant sub-Finsler metric  satisfying that for any $\lambda > 0$, the space $\lambda Y$ is isometric to $Y$).
\end{theorem}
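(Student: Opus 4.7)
The plan is to reduce the problem to Pansu's computation of the asymptotic cone of a finitely generated nilpotent group.

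First, I would apply the Schwarz--Milnor lemma to the cocompact discrete action of $G$ on $X$. Choosing a bounded set $B \subset X$ whose $G$-translates cover $X$, the set $S = \{g \in G : gB \cap B \neq \emptyset,\ g \neq e\}$ is a finite generating set of $G$, and the orbit map $g \mapsto g x_0$ is a quasi-isometry from $(G, d_S)$ to $(X, d_X)$. Rescaling, the pointed spaces $\frac{1}{\lambda_i}(X, x_0)$ lie at uniformly bounded Gromov--Hausdorff distance from $\frac{1}{\lambda_i}(G, d_S, e)$, so the blow-down hypothesis transfers verbatim to the Cayley graph of $G$.

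Next, I would extract the group-theoretic input. A pointed GH limit of proper geodesic spaces is itself proper, so the balls in $(Y, y_0)$ are compact, which via the uniform quasi-isometry forces polynomial volume growth on $(G, d_S)$. By Gromov's polynomial growth theorem, $G$ contains a nilpotent subgroup $N$ of finite index; since replacing $G$ by a finite-index subgroup changes only the cocompactness constants but not the blow-down, I may assume $G$ is itself nilpotent. I then invoke Pansu's theorem: the Malcev completion of the torsion-free quotient of $G$ is a simply connected nilpotent Lie group $H$, and its associated graded Lie group $H_\infty$ carries a one-parameter family of Carnot dilations $\delta_t$; the rescaled word metrics $\frac{1}{\lambda_i} d_S$ converge in the pointed GH sense to a left-invariant Carnot--Carath\'eodory metric $d_{CC}$ on $H_\infty$ determined by a norm on the bottom layer of the grading. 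This identifies $(Y, y_0)$ with $(H_\infty, d_{CC}, e)$.

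Finally, the scale invariance $\lambda Y \cong Y$ is immediate from the existence of the Carnot dilations: $\delta_\lambda : (H_\infty, d_{CC}) \to (H_\infty, \lambda d_{CC})$ is an isometry of pointed spaces. The main obstacle is the identification step inside Pansu's theorem: one must build Malcev coordinates on $H$, use the Baker--Campbell--Hausdorff formula to approximate products of group elements by Lie-algebra sums at large scales, and verify that the renormalized distances converge uniformly on compact sets to $d_{CC}$. This is technically demanding but carried out in full in \cite{Pansu}; the Schwarz--Milnor reduction and the appeal to \cite{GromovPG} are by comparison routine packaging.
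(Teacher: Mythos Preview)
The paper does not prove this theorem; it is stated in the introduction as a known background result and attributed to Gromov \cite{GromovPG} and Pansu \cite{Pansu}, so there is no ``paper's own proof'' to compare against. Your outline is the standard route through those references.

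That said, there is a slip in your reduction step. The Schwarz--Milnor quasi-isometry $(G,d_S)\to X$ carries a multiplicative constant $L$, not just an additive one; after rescaling by $1/\lambda_i$ the additive error vanishes but $L$ does not, so $\frac{1}{\lambda_i}(X,x_0)$ and $\frac{1}{\lambda_i}(G,d_S,e)$ are only uniformly $L$-bi-Lipschitz, not at bounded (let alone vanishing) Gromov--Hausdorff distance. As written, your argument yields only that $Y$ is bi-Lipschitz to a Carnot group, not that $Y$ itself carries a Carnot--Carath\'eodory metric. The fix is to pull back the metric instead of using the word metric: set $d'(g,h):=d_X(gx_0,hx_0)$, a left-invariant proper metric on $G$ (modulo the finite stabilizer of $x_0$). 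The orbit $Gx_0$ sits at Hausdorff distance at most $\text{diam}(X/G)$ from $X$, so $d_{GH}\bigl(\frac{1}{\lambda_i}(G,d'),\frac{1}{\lambda_i}X\bigr)\to 0$ and the asymptotic cones are genuinely isometric. Pansu's argument applies to any left-invariant proper metric on a virtually nilpotent group, not just word metrics, so the rest of your sketch (polynomial growth from properness of $Y$, Gromov's theorem, Malcev completion, graded limit) then goes through.
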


When the limit is compact, Alan Turing solved the finite dimensional case \cite{Tur}, and using Turing's result, Tsachik Gelander solved the infinite dimensional case \cite{Gel}.

\begin{theorem}
\rm (Turing--Gelander) Let $X_n$ be a sequence of almost homogeneous spaces. If the sequence $X_n$ converges in the Gromov--Hausdorff sense to a compact space $X$, then $X$ is a (possibly infinite dimensional) torus equipped with an invariant metric. 
\end{theorem}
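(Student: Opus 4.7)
The plan is to reduce Gelander's theorem to Turing's theorem via passage to finite-dimensional quotients. First, by Fukaya's equivariant Gromov--Hausdorff convergence, after passing to a subsequence and choosing basepoints $x_i \in X_i$, I would extract from $G_i \curvearrowright X_i$ a limit action $G \curvearrowright X$ by isometries, where $G \leq Iso(X)$ is a closed subgroup. Since $X$ is compact, $Iso(X)$ is compact by the Arzel\`a--Ascoli theorem, so $G$ is a compact topological group. The hypothesis $\mathrm{diam}(X_i/G_i) \to 0$ forces $\mathrm{diam}(X/G) = 0$, so $G$ acts transitively on $X$.

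Next, I would invoke the structure theory of compact topological groups: by the Peter--Weyl theorem, $G$ is an inverse limit $G = \varprojlim_\alpha G_\alpha$ of compact Lie groups $G_\alpha = G/N_\alpha$, indexed over the closed normal subgroups $N_\alpha$ with Lie quotient. For each such quotient the orbit space $X_\alpha := X/N_\alpha$ is a compact finite-dimensional geodesic space, and $G_\alpha$ acts transitively on it by isometries. To apply Turing's theorem to $X_\alpha$, I need to realize it as a Gromov--Hausdorff limit of almost homogeneous spaces. This requires constructing discrete subgroups $H_{i,\alpha} \leq G_i$ that approximate $N_\alpha$ in the equivariant sense, so that $X_i/H_{i,\alpha} \to X_\alpha$ and the residual groups $G_i/H_{i,\alpha}$ act discretely with vanishing quotient diameter. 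Turing's theorem then yields that each $X_\alpha$ is a finite-dimensional torus with invariant Finsler metric, and in particular $G_\alpha$ is a torus acting on itself by translations.

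For the passage to the limit, $X$ is identified with $\varprojlim_\alpha X_\alpha$ as a topological space, and the transition maps $X_\beta \to X_\alpha$ become continuous surjective homomorphisms of tori, which induce a topological group structure on $X$. A priori, the inverse limit of finite-dimensional tori in the category of topological groups could be a \emph{protorus} of solenoidal type rather than a topological torus; the additional input is the invariant geodesic metric on $X$. Every element of $X$ is connected to the identity by a geodesic, yielding a continuous one-parameter subgroup through each element, so every element is continuously divisible in a compatible way. This excludes solenoidal pathologies and permits writing $X \cong (\mathbb{R}/\mathbb{Z})^I$ for some index set $I$, with the invariant metric inherited from the finite-dimensional approximants.

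The main obstacle is the final identification: proving that an inverse limit of finite-dimensional tori equipped with an invariant geodesic metric is a topological torus rather than a general protorus. Controlling how the invariant Finsler metrics from the Turing applications assemble consistently into a single invariant geodesic metric on $X$, and exploiting path-connectedness and divisibility to eliminate solenoidal behavior, is the analytic heart of the argument. A secondary technical point is the careful construction of the approximating discrete subgroups $H_{i,\alpha} \leq G_i$ and the verification of the induced equivariant convergence $X_i/H_{i,\alpha} \to X_\alpha$ under the quotient action.
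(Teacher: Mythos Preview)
This theorem is cited in the paper as a prior result of Gelander \cite{Gel}; the paper does not give its own proof. There is therefore no proof in the paper to compare your proposal against. That said, a few remarks on the proposal itself.

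Your overall architecture---pass to finite-dimensional quotients $X_\alpha = X/N_\alpha$ via Peter--Weyl, apply Turing to each, and reassemble---is natural, and Gelander's argument is indeed a reduction to Turing. But the step you flag as a ``secondary technical point'' is in fact the central difficulty, and your proposed mechanism does not work. To apply Turing's theorem to $X_\alpha$ you need $X_\alpha$ to be a Gromov--Hausdorff limit of almost homogeneous spaces, and you propose to manufacture this by finding normal subgroups $H_{i,\alpha} \triangleleft G_i$ approximating $N_\alpha$. But the $G_i$ are arbitrary discrete groups; there is no reason whatsoever that they contain normal subgroups converging to a prescribed closed normal subgroup $N_\alpha$ of the limit group. (Consider for instance $G_i$ simple finite groups of growing order.) This is not a technicality one can patch; it is a genuine obstruction to the route as written.

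The way around this, and the way the present paper proceeds in the non-compact case, is to work directly with the limit group rather than trying to push structure back to the $G_i$. One first shows the limit group acting transitively on $X$ is nilpotent (this is the content of Theorem~\ref{PI} here, which applies equally to compact $X$), hence $X$ itself is a compact connected nilpotent locally compact group. From there one argues abelianness and the torus structure on the limit side using Gleason--Yamabe and the fact that compact connected nilpotent Lie quotients are tori (cf.\ Corollary~\ref{compact-nilpotent-2}). Your solenoid concern is legitimate, and ruling it out does use that $X$ is a geodesic space, but that step comes after establishing abelianness, not as a substitute for it.
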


The main result of this paper deals with the case in which the limit is non-compact.

\begin{theorem}\label{thm:main-theorem}
\rm Let $(X_n, p_n)$ be a sequence of pointed almost homogeneous spaces. If the sequence $(X_n, p_n)$ converges in the pointed Gromov--Hausdorff sense to a space $(X,p)$, then $X$ is a nilpotent group equipped with an invariant metric. Furthermore, if $X$ is semi-locally-simply-connected, then it is a Lie group equipped with a sub-Finsler metric, and  for $n$ large enough, $\pi_1(X)$ is a subgroup of a quotient of $\pi_1(X_n)$.
\end{theorem}

\begin{remark}
 \rm  The hypothesis of $X$ being semi-locally-simply-connected can be replaced by $X$ having finite topological dimension, because of the following result (solution to Hilbert's fifth problem) by Deane Montgomery and Leo Zippin \cite{MZ}.
\end{remark}

\begin{theorem}\label{thm:mz-hilbert}
\rm  (Montgomery--Zippin) Let $X$ be a homogeneous proper geodesic space. If $X$ has finite topological dimension, then it is homeomorphic to a topological manifold, and its isometry group is a Lie group.
\end{theorem}

\subsection{Lower semi-continuity of $\pi_1$}

In the context of Theorem \ref{thm:main-theorem}, if $X$ is semi-locally-simply-connected then $\pi_1(X)$ is in some sense not larger than $\pi_1(X_n)$.  This is an instance of a more general phenomenon (see \cite[Section 3E]{GromovMS}).

\begin{theorem}\label{thm:lsfg-classic}
\rm  (Folklore) Let $X_n$ be a sequence of compact geodesic spaces. If the sequence $X_n$ converges to a compact semi-locally-simply-connected space $X$,  then for $n$ large enough, $\pi_1(X)$ is a quotient of $\pi_1(X_n)$.
\end{theorem}  
 
 This property is further studied by Christina Sormani and Guofang Wei in  \cite{SWHau, SWUni, SWCov}. This result fails if the limit is not compact  \cite[Example 1.2]{SWUni}, or not semi-locally-simply-connected \cite[Example 2.6]{SWHau}. The following example shows that if one works with homogeneous spaces instead of almost homogeneous spaces in Theorem \ref{thm:main-theorem}, it may happen that $\pi_1(X_n)$ is trivial for all $n$ and $X$ is semi-locally-simply-connected, but $\pi_1(X)$ is non-trivial (see \cite[Examples 3.11.$a_+$]{GromovMS}).

\begin{example}
\rm  Let $Y$ be $\mathbb{S}^1$ with its standard metric of length $2 \pi$  and $Z_n $ be $\mathbb{S}^3$ with the round (bi-invariant) metric of constant curvature $1/n^2$.  Let $X_n $ be the quotient $(Y \times Z_n ) / \mathbb{S}^1$ where $\mathbb{S}^1$ acts on $Y \times Z_n$ as follows:
\[  z(w,q) = (wz ^{-1}, z q): z, w \in \mathbb{S}^1, q \in \mathbb{S}^3. \]
Then $X_n$ is isometric to $\mathbb{S}^3$ equipped with a re-scaled Berger metric. The sequence $X_n$ consists of simply connected homogeneous spaces, but its pointed Gromov--Hausdorff limit is $\mathbb{S}^1 \times \mathbb{R}^2$, which is not simply connected.
\end{example}

Under the hypotheses of Theorem \ref{thm:main-theorem}, assuming $X$ is semi-locally-simply-connected, one may wonder whether $\pi_1(X)$ is a quotient of $\pi_1(X_n)$ for $n$ large enough (without passing to a subgroup) just like in Theorem \ref{thm:lsfg-classic}. The following example shows that it is not the case.

\begin{example}
\rm  Define a ``dot product'' $\mathbb{R}^4 \times \mathbb{R}^4 \to \mathbb{R}^6$ in $\mathbb{R}^4$ as
\[  (a \cdot b) : = (a_1b_2, a_1b_3, a_1b_4, a_2b_3, a_2b_4,  a_3b_4),    \text{ } a,b \in \mathbb{R}^4  .     \]
With it, define a group structure on $\mathbb{R}^4 \times \mathbb{R}^6$ as
\[  (a,x) \cdot (b,y) : = (a+b , x+y+ (a\cdot b)), \text{ }a,b \in \mathbb{R}^4 , \text{ } x,y \in \mathbb{R}^6 . \]
Let $G$ be the above group equipped with a left invariant Riemannian metric. For each $n$, define the subgroups $K \leq K_n \triangleleft G_n \leq G \cong \mathbb{R}^4 \times \mathbb{R}^6 $ as:
\begin{align*}
 K & : =  \{ 0 \}  \times \mathbb{Z}^6     \\
  K_n & : =   ( n \mathbb{Z}^4 ) \times \mathbb{Z}^6   \\
    G_n  & : =   \left( \frac{1}{n} \mathbb{Z}^4  \right) \times \left( \frac{1}{n^2} \mathbb{Z}^6  \right)   .
\end{align*} 
The sequence $X_n : = G / K_n$ converges in the pointed Gromov--Hausdorff sense to $X:= G/K$. Since the sequence of finite groups $G_n / K_n$ acts on $X_n$  with $\diam (X_n/(G_n/K_n))\to 0$, the sequence $X_n$ consists of almost homogeneous spaces. A direct computation shows that the abelianization of $\pi_1(X_n ) = K_n$ is isomorphic to 
\[ \mathbb{Z}^4 \oplus  \left( \mathbb{Z} / n^2 \mathbb{Z}  \right)^6 . \]
 Then it is easy to see that $\pi_1(X) = K \cong \mathbb{Z}^6$ is not a quotient of $\pi_1(X_n)$ for any $n$.
\end{example}

\subsection{Existence of the limit}

Given  a sequence $X_n$ of almost homogeneous spaces, one may wonder which conditions guarantee the existence of a convergent subsequence. For example, if the spaces $X_n$ are Riemannian manifolds with Ricci curvature $\geq K$ and dimension $\leq N$ for some $K \in \mathbb{R}$, $N \in \mathbb{N}$, Gromov compactness criterion implies the existence of a convergent subsequence \cite[Theorem 5.3]{GromovMS}. 

 Itai Benjamini, Hilary Finucane and Romain Tessera found another sufficient condition for this partial limit to exist when the spaces $X_n$ are graphs \cite{BFT}.

\begin{theorem}\label{thm:bft}
\rm  (Benjamini--Finucane--Tessera) Let $D_n \leq \Delta _n$ be two sequences going to infinity, and let $(X_n,p_n)$ be a sequence of pointed graphs. Assume there is a sequence of discrete groups $G_n \leq \iso (X_n)$ acting transitively on the sets of vertices. If the balls of radius $D_n$ in $X_n$ satisfy
\[ \vert B^{X_n}(p_n,D_n)\vert =O(D_n^q) \]
for some $q>0$, then the sequence of pointed almost homogeneous spaces
\[ \left( \dfrac{1}{\Delta_n} X_n ,p_n \right)  \] 
 has a subsequence converging in the pointed Gromov--Hausdorff sense to a nilpotent Lie group.
\end{theorem}

\begin{remark}
\rm  Recently, Romain Tessera and Matthew Tointon showed that the hypothesis in Theorem \ref{thm:bft} of the groups $G_n$ being discrete can be removed \cite{TT}. Moreover, one  could define a sequence of \textit{weakly almost homogeneous spaces} to be a sequence of proper geodesic spaces $X_n$ with groups of isometries $G_n \leq \iso (X_n)$ acting with discrete orbits and such that diam$(X_n / G_n ) \to 0$.  Their results imply that Theorem \ref{thm:main-theorem} holds under the weaker assumption that the spaces $X_n$ are weakly almost homogeneous.
\end{remark}

\subsection{Further problems}

There are two natural strengthenings of Theorem \ref{thm:main-theorem}. One could ask if a weaker conclusion holds if one removes the hypotheses of the groups $G_n$ acting almost transitively, or the limit $X$ being semi-locally-simply-connected.

\begin{con}
 \rm Let $(X_n,p_n)$ be a sequence of pointed simply connected proper geodesic spaces. Assume there is a sequence of discrete groups of isometries $G_n \leq \iso (X_n)$ with $\diam (X_n/G_n) \leq C$ for some $C>0$. If $(X_n,p_n)$ converges in the pointed Gromov--Hausdorff sense to a pointed semi-locally-simply-connected space $(X,p)$ for which $\iso (X)$ is a Lie group, then $X$ is simply connected.  
\end{con}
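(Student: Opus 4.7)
The plan is to run an argument parallel to the one yielding the surjections (\ref{Li}) in Theorem \ref{MAIN}, adapted to the cocompact (rather than almost transitive) setting, with the simply-connectedness of the $X_i$ playing the role of controlling all topology at once.

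First, I would apply Fukaya's equivariant Gromov--Hausdorff compactness theorem and pass to a subsequence so that $(X_i, p_i, G_i) \to (X, p, G)$ in the equivariant pointed Gromov--Hausdorff sense, with $G \leq Iso(X)$ closed and diam$(X/G) \leq C$. Since $X$ is semi-locally-simply-connected, it admits a universal cover $\tilde \pi : \tilde X \to X$; equip $\tilde X$ with the pulled-back length metric so that $\tilde \pi$ is a local isometry, and let $\Gamma = \pi_1(X, p)$ act on $\tilde X$ by deck transformations.

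The core idea is to realize $\tilde X$ as an equivariant limit of the sequence $X_i$ itself. The natural tool is the $\delta$-cover of Sormani--Wei (\cite{SWCov}): for $\delta$ smaller than some scale depending on the semi-local-simple-connectedness of $X$, the $\delta$-cover $\tilde X^{\delta}$ of $X$ coincides with $\tilde X$, while for every $i$ and every $\delta > 0$ the $\delta$-cover of the simply connected space $X_i$ is trivial, $\tilde X_i^{\delta} = X_i$. If one establishes continuity of $\delta$-covers under equivariant pointed Gromov--Hausdorff convergence with uniform cocompact actions, then choosing compatible basepoints yields a convergence $(X_i, \tilde p_i, \tilde G_i) \to (\tilde X, \tilde p, \tilde G)$ for suitable lifts $\tilde G_i$ of $G_i$. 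Since the covers on the $X_i$-side are trivial, the two convergences $X_i \to X$ and $X_i \to \tilde X$ must be compatible with $\tilde \pi$, forcing $\tilde \pi$ to be an isometry and hence $\pi_1(X) = 1$.

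The main obstacle is precisely the continuity of $\delta$-covers in the pointed, cocompact setting. In the compact case this is due to Sormani--Wei, but here two new difficulties appear: one must rule out $\delta$-chains in $X_i$ that escape to infinity yet record nontrivial topology invisible to local data, and one must lift the group actions $G_i$ consistently, which requires a compatible system of basepoints for the covers. Both issues should be controllable by exploiting the uniform cocompactness: every $\delta$-loop in $X_i$ can be translated by an element of $G_i$ into a fundamental domain of uniformly bounded diameter, where Gromov--Hausdorff approximations to $X$ are valid, so the local Spanier-type generating sets for the $\delta$-covers should persist in the limit. The reason this remains a conjecture rather than a theorem is that a genuinely cocompact (not almost transitive) action does not let one invoke the infinitesimal compactness arguments that drive the nilpotent Lie structure in Theorem \ref{MAIN}, so some new ingredient is needed to promote bounded cocompact control to a global topological rigidity statement.
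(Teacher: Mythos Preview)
The statement you are addressing is a \emph{conjecture} in the paper (listed under ``Further problems''), not a theorem; the paper offers no proof for it, so there is nothing to compare your attempt against. Your proposal is therefore a strategy toward an open problem rather than an alternative to an existing argument, and you yourself acknowledge this in your final paragraph.

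As a strategy it is reasonable in outline, and you correctly isolate the crux: continuity of $\delta$-covers under pointed Gromov--Hausdorff convergence is established by Sormani--Wei only in the compact case, and the paper itself cites \cite{SWUni}, Example~1.2, precisely to record that the analogous lower semicontinuity of $\pi_1$ fails for non-compact limits. Your proposed remedy---using the uniform cocompact action to translate $\delta$-loops into a fixed fundamental domain---does not close this gap. A $\delta$-loop in $X_i$ may have arbitrarily large diameter even though each edge is short, so no single element of $G_i$ moves it into a bounded region; and the null-homotopy witnessing its contractibility in the simply connected $X_i$ may itself sweep out an arbitrarily large portion of $X_i$, so Gromov--Hausdorff control on bounded balls does not obviously transfer that null-homotopy to $X$. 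This escape-to-infinity of homotopies is exactly what distinguishes the non-compact case, and nothing in your sketch supplies a mechanism to rule it out. Until that step is genuinely handled, the argument remains heuristic, which is consistent with the paper's decision to leave the statement as a conjecture.
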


\begin{con}
 \rm Let $(X_n,p_n)$ be a sequence of pointed simply connected almost homogeneous spaces. If $(X_n,p_n)$ converges in the pointed Gromov--Hausdorff sense to a pointed space $(X,p)$, then $X$ is simply connected.
\end{con}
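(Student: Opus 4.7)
By Theorem \ref{MAIN}, the limit $X$ is a nilpotent locally compact group equipped with an invariant geodesic metric, and in particular it is path-connected. The second part of that theorem already reduces the conjecture to a single statement, namely that $X$ is semi-locally-simply-connected: once this is known, the surjections $\Lambda_i \twoheadrightarrow \pi_1(X)$ with $\Lambda_i \leq \pi_1(X_i) = 1$ immediately force $\pi_1(X) = 1$. So the plan is to prove that the Gromov--Hausdorff limit of a sequence of \emph{simply connected} almost homogeneous spaces is automatically semi-locally-simply-connected.

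To approach this, I would exploit the pro-Lie structure of $X$. By the Gleason--Yamabe solution to Hilbert's fifth problem, the connected locally compact group $X$ admits a neighborhood basis of the identity by compact normal subgroups $K_\alpha$ whose quotients $L_\alpha := X/K_\alpha$ are nilpotent Lie groups. Each $L_\alpha$ inherits an invariant Finsler or sub-Finsler quotient metric, so is a semi-locally-simply-connected homogeneous space. If one can show $\pi_1(L_\alpha) = 1$ for every $\alpha$, then $X$ itself is semi-locally-simply-connected at the identity: a sufficiently small loop in $X$ projects to a loop in some $L_\alpha$ that bounds a disk there, and this disk can be lifted using the local triviality of the projection $X \to L_\alpha$ over simply connected subsets. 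Translation-invariance then yields sss everywhere.

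The heart of the argument is therefore to show $\pi_1(L_\alpha) = 1$. My plan is to realize each $L_\alpha$ as the Gromov--Hausdorff limit of a simply connected almost homogeneous sequence, so that the second part of Theorem \ref{MAIN} applies to it (this is legitimate because $L_\alpha$, being a Lie group, is automatically sss). Using equivariant Gromov--Hausdorff compactness, the actions $G_i \curvearrowright X_i$ converge, along a subsequence, to an isometric action of a closed subgroup of $\mathrm{Iso}(X)$ containing $X$ itself; extracting discrete subgroups $H_{\alpha,i} \leq G_i$ whose limiting action on $X$ coincides with the $K_\alpha$-action provides natural approximations $X_i / H_{\alpha,i} \to L_\alpha$.

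The main obstacle is that such quotients $X_i/H_{\alpha,i}$ need not themselves be simply connected: quotienting a simply connected space by an isometric group action can introduce fundamental group. To remedy this, I would replace $X_i/H_{\alpha,i}$ by an appropriately chosen equivariant cover engineered to inherit simple-connectedness from $X_i$ while still converging to $L_\alpha$; alternatively, one could pursue a direct loop-lifting argument, showing that any short non-contractible loop in $L_\alpha$ would pull back, for large $i$, to a non-contractible loop in $X_i$, contradicting the simply connected hypothesis. Making either of these strategies rigorous, and in particular controlling how much of the fundamental group of the quotient comes from the group action versus genuine topology of $L_\alpha$, is where the real difficulty lies.
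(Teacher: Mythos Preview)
This statement is listed in the paper as a \emph{conjecture} (Section~1.3, ``Further problems''), not a theorem; the paper offers no proof. So there is nothing to compare your attempt against---you are sketching an attack on an open problem, and you yourself flag the decisive step as unresolved.

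Your reduction is sound, and in fact sharper than your disk-lifting justification suggests. If every Lie quotient $L_\alpha = X/K_\alpha$ were simply connected, then for $K_\alpha \supseteq K_\beta$ the image $K_\alpha/K_\beta$ would be a compact subgroup of the simply connected nilpotent Lie group $L_\beta$, hence trivial by Theorem~\ref{simply-connected-nilpotent}. All the $K_\alpha$ would then coincide, and since they form a neighborhood basis of the identity (Corollary~\ref{glushkov-corollary}) they would be trivial; so $X$ would already be a Lie group, not merely semi-locally-simply-connected, and Theorem~\ref{MAIN} would finish the job.

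The genuine gap is exactly where you place it: establishing $\pi_1(L_\alpha)=1$. Both of your proposed routes hit the same wall. Quotienting $X_i$ by an approximating subgroup $H_{\alpha,i}$ can manufacture fundamental group, and you have no mechanism to distinguish $\pi_1$ coming from the action from $\pi_1$ genuinely present in $L_\alpha$. The loop-lifting alternative is the same difficulty in disguise: a non-contractible loop in $L_\alpha$ lifts only to a \emph{path} in $X$ between two points of a single $K_\alpha$-orbit, which need not close up, so there is nothing non-contractible to transport back to $X_i$. Neither idea, as written, gets past this, and the paper gives no indication that the author knows how to either---hence the conjecture.
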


\subsection{Summary}

The proof of Theorem \ref{thm:main-theorem} consists of three parts. In the first part, we show that $X$ is a nilpotent group equipped with an invariant metric (Theorem \ref{thm:part-i}). In the second part, we show that if $X$ is semi-locally-simply-connected, then it is a Lie group (Theorem \ref{thm:part-ii}). In the third part, we show that if $X$ is a Lie group, then for large enough $n$, there are quotients of $ \pi_1(X_n)$ containing isomorphic copies of $ \pi_1 (X)$ (Theorem \ref{thm:part-iii}).

In Section \ref{sec:prelim}, we present the relevant definitions and preliminary results required for the proof of Theorem \ref{thm:main-theorem}. In Section \ref{sec:part-i}, by repeated applications of a Margulis Lemma by  Breuillard--Green--Tao \cite{BGT} we find almost nilpotent discrete groups of isometries $G_n^{\prime} \leq \iso (X_n)$ acting almost transitively (Lemma \ref{lem:almost-nilpotent}). Combining this with the Gleason--Yamabe structure theory of locally compact groups \cite{Gleason, Yamabe}, and a result of Berestovskii about groups of isometries of homogeneous spaces \cite{BerBus}, we obtain Theorem \ref{thm:part-i}.  In Section \ref{sec:slsc} we prove  Theorem \ref{thm:part-ii} using elementary Lie theory and algebraic topology techniques.  

Sections \ref{sec:trans} to \ref{sec:dictionary} contain the proof of Theorem \ref{thm:part-iii}, finishing the proof of Theorem \ref{thm:main-theorem}. In Section \ref{sec:trans} we use commutator estimates similar to the ones in \cite{BK, GrAF} to prove that the groups $G_n^{\prime}$ act ``almost by translations'' on the spaces $X_n$ (Proposition \ref{prop:connected}). In Section \ref{sec:nss}, we use the escape norm from  \cite{BGT} to find small normal subgroups $W_n \triangleleft G_n^{\prime}$  with the property that the spaces $X_n$ and $X_n/ W_n$ are globally Gromov--Hausdorff close, and the groups $\Gamma_n := G_n^{\prime}/W_n$ contain large subsets $A_n$ without non-trivial subgroups. 

In Section \ref{sec:nilprog}, we use the space $X$ as a model (as defined by Hrushovski \cite{Hr}) for the ultralimit 
\[ \textbf{A}  := \lim\limits_{n \to \alpha} A_n.\]
 This enables us to find large nice subsets $P_n$ of $\Gamma _n$ (nilprogressions in $C$-normal form, as defined by Breuillard--Green--Tao \cite{BGT}).

In Section \ref{sec:malcev}, we use the Malcev Embedding Theorem \cite{Malcev} to find  groups $ \tilde{\Gamma}_{n}$ isomorphic to lattices in simply connected Lie groups, with isometric actions
\[  \Phi_n: \tilde{\Gamma}_{n}  \to  \iso ( X_n /W_n). \]
Using elementary algebraic topology, we show that the kernels $\kker (\Phi_n)$ of those actions are isomorphic to quotients of $\pi_1(X_n).$  Finally, in section \ref{sec:dictionary}, we find subgroups of $\kker (\Phi_n)$ isomorphic to $\pi_1 (X)$,  finishing the proof of Theorem \ref{thm:part-iii}, and consequently Theorem \ref{thm:main-theorem}.

\subsection{Acknowledgments}

The author would like to thank Vladimir Finkelshtein, Enrico LeDonne, Adriana Ortiz, Anton Petrunin, and Burkhard Wilking for helpful comments and discussions. The author would also like to thank an anonymous reviewer whose comments have improved this paper significantly. This research was supported in part by the International Centre for Theoretical Sciences (ICTS) during a visit for participating in the program - Probabilistic Methods in Negative Curvature (Code: ICTS/pmnc 2019/03).

\section{Preliminaries}\label{sec:prelim}

\subsection{Notation}

 For $H,K$ subgroups of a group $G$, we define their \textit{commutator subgroup} $[H,K]$ to be the group generated by the elements $[h,k]:=h^{-1}k^{-1}hk$ with $h \in H, k \in K$. Define $G^{(0)} $ as $G$, and $G^{(j+1)}$ inductively as $G^{(j+1)} := [G, G^{(j)}] $. If $G^{(s)} = \{ e\}$ for some $s \in \mathbb{N}$, we say that $G$ is \textit{nilpotent of step} $\leq s$. 
 
 We say that a set $A \subset G$ is \textit{symmetric} if $A=A^{-1} $ and $e \in A$. For subsets $A_1, \ldots, A_k \subset G$, we denote by $A_1 \cdots A_k $ the set of all products 
\[ \{ a_1 \cdots  a_k \vert a_i \in A_i \} \subset G, \] 
 and  by $A_1 \times \ldots \times A_k $ the set of all sequences
\[ \{ (a_1, \ldots,  a_k) \vert a_i \in A_i \} \subset G^k.    \]
If $A_i = A $ for $i \in \{ 1, \ldots , k\}$, we will also denote $  A_1 \cdots A_k  $ by $A^k$, and $  A_1 \times \ldots \times A_k  $ by $A^{  \times k  }$.\\

Let $X$ be a topological space and $\beta, \gamma : [0,1] \to X$ two curves. We denote by $\overline{\beta} : [0,1] \to X$ the curve given by $\overline{\beta} (t) =  \beta (1-t) $. And if $\beta(1)= \gamma (0)$, we denote by $\beta \ast \gamma : [0,1] \to X$ the concatenation
\begin{center}
$    \beta \ast \gamma (t) = \begin{cases} 
      \beta (2t) & \text{ if } t \leq 1/2 \\
      \gamma (2t-1) & \text{ if }t \geq 1/2. 
   \end{cases}   $
\end{center}
If $\beta(1)\neq \gamma (0)$, we say that $\beta \ast \gamma$ is undefined. \\

In a metric space $(X,d)$, we will denote the open ball of center $p\in X$ and radius $r > 0$ as  $ B_{d}^X(p,r) $. We will sometimes omit $d$ or $X$ and write $B(p,r)$ if the metric space we are considering is  clear from the context.

\subsection{Groups of isometries}

For a pointed proper metric space $(X,p)$, we equip its isometry group $\iso (X)$ with the metric $d_0^{p}$ given by
\begin{equation}\label{eq:d0}
d_{0}^{p} (h_1, h_2) : = \inf_{r > 0 } \left\{ \frac{1}{r} + \sup_{x \in B(p,r)} d(h_1x, h_2x)  \right\}          
\end{equation}
for $h_1, h_2 \in \iso (X)$. It is easy to see that this metric is left invariant, induces the compact-open topology, and makes $\iso (X)$ a proper metric space.

 \begin{definition}
\rm   Let $f : X \to Y$ be a map between proper geodesic spaces. We say that $f$ is a \textit{metric submersion} if for every $x \in X $ and $r > 0$, the image of the closed ball of radius $r$ around $x$ is the closed ball of radius $r$ around $f(x)$.
 \end{definition}

\begin{lemma}\label{lem:submersion}
\rm  Let $X$ be a proper geodesic space, and $G \leq \iso (X)$  a closed subgroup. Then the quotient map $f: X \to X/ G$ is a metric submersion.
\end{lemma}

\begin{proof}
Let $x \in X $, $r > 0 $, and  $y \in X/G$ with $d_{X/G}( f(x), y) \leq  r$. Since $G$ is closed, the orbits are closed, and since $X$ is proper, there is $z \in f^{-1}(y)$ with $d(x,z) = d(f(x), y) \leq  r$. This proves that the image of the closed ball of radius $r$ around $x$ contains the closed ball fo radius $r$ around $f(x)$. The other contention is clear as $f$ is $1$-Lipschitz.
\end{proof}

\begin{lemma}\label{lem:horizontal-lift}
\rm  Let $f: X \to Y$ be a metric submersion between proper geodesic spaces, $\gamma : [0,1] \to Y$ a Lipschitz curve, and $p \in f^{-1}(\gamma (0))$. Then there is a curve $\tilde{\gamma} : [0,1] \to X$ with $f \circ \tilde{ \gamma} = \gamma ,$  $\tilde{\gamma}(0) = p$,  and $\leng (\tilde{\gamma}) = \leng (\gamma )$. 
\end{lemma}

\begin{proof}
For each $j \in \mathbb{N}$, let $D_j : = \{ 0, \frac{1}{2^j}, \ldots , \frac{2^j-1}{2^j} , 1 \} $, and define $h_j : D_j \to X$ as follows:  Let $h_j(0)=p$, and inductively, let $h_j(x + 1/2^j)$ be a point in $f^{-1}( \gamma (x + 1/2^j ) )$, such that
\[   d_{X} (  h_j ( x + 1/2^j), h_j ( x )     )  = d_{Y} (  \gamma ( x + 1/2^j  ), \gamma (x)   ), \text{ for }x \in D_j \backslash \{ 1 \} .    \]
Using Cantor's diagonal argument, we can find a subsequence of $h_j$ that converges for every dyadic rational. Since the maps $h_j$ are uniformly Lipschitz, we can extend this limit map to a Lipschitz map $\tilde{ \gamma} : [0,1] \to X$. It is easy to check that $\tilde{\gamma}$ satisfies the desired properties.
\end{proof}

\begin{lemma}\label{lem:short-powers}
\rm Let $(X,p)$ be a pointed metric space, $r > 0 $, and  
\[   S : = \{   g \in \iso (X)  \vert d(gp,p ) \leq  r \}. \]
Then for each $m \in \mathbb{N}$ one has
\[   S^m \subset \{ g \in \iso (X) \vert d(gp,p) \leq m r \}.              \]
\end{lemma}
\begin{proof}
For $s_1, \ldots , s_m \in S$, by the triangle inequality we have
\[ d(s_1 \cdots s_m p, p)  \leq \sum_{j=1}^m d(s_1 \cdots s_j p, s_1\cdots s_{j-1}  p)  = \sum_{j=1}^m d(s_j p , p )  \leq mr.\]  
\end{proof}

\begin{lemma}\label{lem:short-generators}
\rm Let $(X,p)$ be a pointed proper geodesic space, and $G \leq \iso (X)$ a closed subgroup. Then the set 
\[   S : = \{   g \in G \vert d(gp,p ) \leq 3\cdot \diam (X/G) \} \]
generates $G$. 
\end{lemma}

\begin{proof}
Let $g \in G$. Since $X$ is geodesic, there is a sequence of points $p=p_0, p_1, \ldots , p_k = gp \in X$ satisfying that $d(p_j,p_{j-1}) \leq \diam (X/G)$ for each $j$. Choose a sequence $g_1, \ldots, g_{k-1} \in G$ such that $d(g_jp,p_j)\leq \diam (X/G)$ for each $j$, and let $g_0 : = \id _X$, $g_k : = g$. From the triangle inequality, for each $j \in \{ 1, \ldots , k\}$ we have $g_{j-1}^{-1}g_j \in S$. Then 
\[  g = g_1  ( g_1^{-1}g_2) \cdots  (g_{k-1}^{-1}g_k) \in S^k . \]
\end{proof}
\begin{lemma}\label{lem:diameter-subgroup}
\rm Let $X$ be a proper geodesic space, and $H \leq G \leq \iso (X)$ be closed subgroups. Then 
\[  \text{diam}(X/H) \leq 3 [G: H ]\cdot \diam (X/G) .   \]
\end{lemma}
\begin{proof}
Let $x \in X$, and define 
\[  S : = \{   g \in G \vert d(gx,x) \leq 3 \cdot \diam (X/G)  \}, \]
which generates $G$ by Lemma \ref{lem:short-generators}. If  we have $S^{k+1} H = S^k H$ for some $k \in \mathbb{N}$, then  an inductive argument implies that
\[S^kH = \bigcup_{n \in \mathbb{N}} S^n H = G .\]
Since there are only $[G:H]$ cosets, then $S^{k+1}H = S^k H$  for $k \geq [G:H]-1$. This implies that $S^{[G:H]-1}$ intersects each $H$-coset. 

For any $y \in X$, there is $g \in G$ with $d(x,gy)\leq \diam (X/G)$, and by the above analysis, there is $u \in S^{[G:H]-1}$ with $u^{-1}g \in H$. Also,
\begin{align*}
d(x,u^{-1}gy) & =  d(ux,gy)\\
& \leq  d(ux,x) + d(x,gy)\\
& \leq   3  ([G:H]-1) \cdot \diam (X/G) + \diam (X/G),
\end{align*} 
where we used Lemma \ref{lem:short-powers} in the third line.  Since $x,y \in X$ were arbitrary, the result follows.
\end{proof}
\begin{lemma}\label{lem:normal-stabilizers}
\rm Let $(X,p)$ be a pointed proper geodesic space,  $G \leq \iso (X)$ a closed group, and $H \triangleleft G$ a normal subgroup. If for some $\varepsilon > 0 $, one has $d(hp,p) \leq \varepsilon$ for all $h \in H$, then for all $h \in H$, $x \in X$, one has 
\[  d(hx,x) \leq \varepsilon + 2 \cdot \diam (X/G) .              \]
 In particular, if $H$ is contained in the stabilizer of $p$, and $G$ acts transitively on $X$, then $H$ is trivial.
\end{lemma}
\begin{proof}
For $ x \in X$, there is $g \in G$ with $d(gx,p) \leq \diam (X/ G)$. Then for $h \in H$ one has by the triangle inequality,
\begin{align*}
d(hx,x) & = d(ghx,gx) \\
& \leq d(ghg^{-1}gx, ghg^{-1} p) + d(ghg^{-1}p,p) + d(p, gx)\\
& \leq \diam (X/ G) + \varepsilon + \diam (X/G),
\end{align*}
where we used that $H$ is normal in the third line. Since $x \in X$, $h \in H$ were arbitrary, the lemma follows.
\end{proof}

The following result by Valerii Berestovskii concerns groups that act transitively on geodesic spaces \cite[Theorem 1]{BerBus}.

\begin{theorem}\label{thm:berestovskii-open}
\rm (Berestovskii) Let $X$ be a proper geodesic space, and $G$ a closed subgroup of $\iso (X)$ acting transitively. If $\mathcal{O} \leq G$ is an open subgroup, then $\mathcal{O}$ also acts transitively on $X$. 
\end{theorem}

\subsection{Lie groups and Hilbert's fifth problem}

\begin{definition}
\rm Let $\mathfrak{g}$ be a nilpotent Lie algebra. We say that an ordered basis $  \{ v_1, \ldots , v_r \} \hookrightarrow \mathfrak{g}$ is a \textit{strong Malcev basis} if for all $k \in \{1, \ldots ,r \} $, the vector subspace $J_k \leq \mathfrak{g}$ generated by $\{ v_{k+1}, \ldots , v_r \}$ is an ideal, and $v_k + J_k$ is in the center of $\mathfrak{g}/J_k$.
\end{definition}

\begin{theorem}\label{thm:simply-connected-nilpotent}
\rm Let $G$ be a simply connected nilpotent Lie group, $\mathfrak{g} $ its Lie algebra, and $ \mathfrak{V} = \{ v_1, \ldots , v_r \} \hookrightarrow \mathfrak{g}$ a strong Malcev basis. Then
\begin{itemize}
\item $\exp : \mathfrak{g} \to G$ is a diffeomorphism.
\item The map $ \psi : \mathbb{R}^r \to G$ given by $ \psi (x_1, \ldots , x_r) := \exp(x_1v_1) \cdots \exp (x_r v_r)      $ is a diffeomorphism.
\item After identifying $\mathfrak{g} $ with $\mathbb{R}^r$ via $\mathfrak{V}$, the maps $ ( \log \circ $ $ \psi )  : \mathfrak{ g} \to \mathfrak{g}$ and $ (  \psi ^{-1} \circ \exp ) : \mathfrak{ g} \to \mathfrak{g}$ are polynomial of degree bounded by a number depending only on $r$.
\end{itemize} 
\end{theorem}
The proof of Theorem \ref{thm:simply-connected-nilpotent} can be found in  \cite[Proposition 1.2.1 and Proposition 1.2.7]{Cor}. Although the bound on the degree of the polynomials in the third item is not explicitly stated there, it  follows from their proof.

\begin{corollary}\label{cor:aspherical}
\rm Let $G$ be a connected nilpotent Lie group. Then $\pi_1(G)$ is finitely generated torsion free abelian and $\pi_k(G) = 0$ for all $k \geq 2$. 
\end{corollary}

\begin{proof}
By Theorem \ref{thm:simply-connected-nilpotent}, the universal cover $\tilde{G} \to G$ is contractible so $\pi_1(G)$ is torsion free and $\pi_k(G) =  0$ for all $k \geq 2$ (see \cite{Luck} for example). Since fundamental groups of Lie groups are finitely generated and abelian, the result follows.
\end{proof}

\begin{lemma}\label{lem:compact-nilpotent}
\rm Let $G$ be a connected nilpotent Lie group, and $K \leq G$ a compact subgroup. Then $K$ is central in $G$.
\end{lemma}

\begin{proof}
Let $\mathfrak{g}$ be the Lie algebra of $G$. By Weyl's unitary trick, we can equip $\mathfrak{g}$ with an inner product for which the adjoint action $\Adjoint : K \to \genlin (\mathfrak{g})$ consists of orthogonal transformations. 

For $h \in K$, it follows from Engel's Theorem that $\Adjoint _h : \mathfrak{g} \to \mathfrak{g}$ is idempotent (all its eigenvalues are equal to 1). Since $\id _{\mathfrak{g}}$ is the only orthogonal idempotent transformation, $\Adjoint _h = \id_{\mathfrak{g}}$ and $h$ commutes with all elements of $G$. Since $h $ was arbitrary, the lemma follows.
\end{proof}

\begin{corollary}\label{cor:compact-nilpotent}
\rm A connected compact nilpotent Lie group is abelian.
\end{corollary}

\begin{definition}
\rm  We say that a continuous map $X \to Y$ between path connected topological spaces \textit{has no content} or \textit{has trivial content} if the induced map $\pi_1(X) \to \pi_1 (Y) $ is trivial. Otherwise, we say that the map \textit{has non-trivial content}.
\end{definition}

\begin{lemma}\label{lem:compact-content}
\rm Let $G$ be a connected nilpotent Lie group and $K$ a non-trivial connected compact subgroup. Then the inclusion $K \to G$ has non-trivial content.
\end{lemma}

\begin{proof}
By Corollary \ref{cor:compact-nilpotent}, $K$ is a torus, so $\pi_1(K)$ is non-trivial. By Lemma \ref{lem:compact-nilpotent}, $K$ is central in $G$, so $G/K$ is a nilpotent Lie group and $\pi_2 (G/K) = 0 $ by Corollary \ref{cor:aspherical}. From the homotopy groups long exact sequence of the fibration $ G \to G/K$, we extract the portion
\[  \pi_2 (G/K) \to \pi_1 (K) \to \pi_1 (G) ,  \]
from which the lemma follows.
\end{proof}

\begin{lemma}\label{lem:lift-neighborhood}
 \rm Let $G$, $\tilde{G}$ be connected Lie groups such that $\tilde{G}$ is a discrete extension of $G$ (i.e. there is a surjective continuous morphism $f: \tilde{G} \to G$ with discrete kernel). Assume $G$ and $\tilde{G}$ are equipped with invariant geodesic metrics for which $f$ is a local isometry. Let $\delta > 0$ be such that $B^{G}(e,\delta)$ contains no non-trivial subgroups, and the inclusion $B^{G}(e, \delta) \to G$ has no content. Then $B^{\tilde{G}}(e,\delta)$ contains no non-trivial subgroups and the inclusion $B^{\tilde{G}}(e, \delta) \to \tilde{G}$ has no content.
\end{lemma}
\begin{proof}
If there is a group $H \subset B^{\tilde{G}}(p, \delta)$, then its image $f(H)$ is a subgroup of $G$ contained in $B^{G}(e, \delta)$, so $f(H) = \{e \} \leq G$. If there is a non-trivial element $h \in H \backslash \{e \}$, we can take a shortest path $\gamma : [0,1] \to \tilde{G}$ from $e$ to $h$. The projection $f \circ \gamma$ would be a non-contractible loop in $G$ contained in $B^{G}(e, \delta )$, contradicting the hypothesis that $B^{G}(e, \delta) \to G$ has no content. Also, we can consider the commutative diagram

\[
  \begin{tikzcd}[column sep=3em]
   B^{\tilde{G}}(e, \delta) \arrow{r}{  } \arrow{d}{} & \tilde{G}  \arrow{d}{f} \\
  B^{G}(e,\delta) \arrow{r}{ } & G  
  \end{tikzcd}
  \]
Since $ f$ is a covering map, the right vertical arrow induces an injective map at the level of fundamental groups, and the bottom horizontal arrow has no content by hypothesis. Therefore the top horizontal arrow has trivial content as well.
\end{proof}

\begin{lemma}\label{lem:li-com-free}
\rm Let $G$ be a Lie group, $\mathfrak{g}$ its Lie algebra, and assume $\exp : \mathfrak{g} \to G$ is a diffeomorphism. Let $g_1, \ldots , g_{\ell} \in G$ be such that  $[g_i,g_j] = e$ for all $i,j \in \{ 1, \ldots , \ell \}$, and the set $\{ \log (  g_1 ), \ldots , \log ( g_{\ell } ) \} \subset \mathfrak{g}$ is linearly independent. Then the group 
\[ \langle   g_1 , \ldots , g_{\ell }  \rangle \leq G \]
is isomorphic to $\mathbb{Z}^{\ell }$.
\end{lemma}

\begin{proof}
If $G$ is abelian, the result is trivial, so the goal is to reduce the general case to the abelian one. For each $i \in \{ 1, \ldots , \ell \}$, set $v_i : = \log (g_i)$, and let $\mathfrak{a}$ be the linear span of $   \{ v_1 , \ldots , v_{\ell } \} $ in $\mathfrak{g}$. Then for $i,j \in \{ 1, \ldots , \ell \}$ one has
\[ \exp (v_j) = g_j  = \Adjoint _{g_i} ( g_j)  = \Adjoint _{ g_i} (\exp (v_j) )  = \exp ( \Adjoint _{g_i } ( v_j ) ). \]
Hence $g_i$ commutes with $\exp (t v_j) $ for all $t \in \mathbb{R}$. A similar argument shows 
\[  [ \exp (s v_i) , \exp (t v_j)   ] = e \, \text{ for all } s,t \in \mathbb{R} .  \]
This implies that $\mathfrak{a}$ is a commutative Lie algebra, and the result follows from the abelian case applied to $\exp ( \mathfrak{a} )$.
\end{proof}

Hilbert's fifth problem consists of understanding which locally compact Hausdorff groups are Lie groups. One satisfactory answer is Theorem \ref{thm:mz-hilbert}. On a different direction, Andrew Gleason and Hidehiko Yamabe showed that any locally compact Hausdorff group is not far from being a Lie group \cite{Gleason, Yamabe}.

\begin{theorem}\label{thm:gleason-yamabe}
\rm (Gleason--Yamabe) Let $G$ be a locally compact Hausdorff group. Then there is an open subgroup $\mathcal{O}\leq G$ with the following property: for any open neighborhood of the identity $U \subset \mathcal{O}$, there is a compact normal subgroup $K \triangleleft \mathcal{O}$ with $K \subset U$ such that $\mathcal{O}/K$ is a connected Lie group.
\end{theorem} 

The following result by Victor Glushkov implies that the set of compact normal subgroups with the property that the corresponding quotient is a connected Lie group is closed under finite intersections \cite{Glu}.

\begin{theorem}\label{thm:glushkov}
\rm (Glushkov) Let $\mathcal{O}$ be a locally compact Hausdorff group, and $K_1, K_2$ compact normal subgroups such that both $\mathcal{O}/K_1$ and $\mathcal{O}/K_2$ are connected Lie groups. Then $\mathcal{O}/(K_1 \cap K_2)$ is a connected Lie group.
\end{theorem}

\begin{corollary}\label{cor:glushkov}
\rm Let $G$ be a locally compact group equipped with a left invariant geodesic metric. If $G$ is not a Lie group, then it contains a sequence of compact, normal, non-trivial subgroups $K_1 \geq K_2 \geq \ldots$ with
\begin{equation}\label{eq:glushkov}
\bigcap_{j=1}^{\infty} K_j  = \{ e \}   
\end{equation} 
such that $G/K_j$ is a connected Lie group for all $j$. Moreover, for infinitely many $j$, the identity connected component of $K_{j}/K_{j+1}$  is non-trivial.
\end{corollary}

\begin{proof}
By Theorems \ref{thm:gleason-yamabe} and \ref{thm:glushkov}, we obtain a sequence of compact, normal, non-trivial subgroups $K_1 \geq K_2 \geq \ldots$ satisfying (\ref{eq:glushkov}) and such that $H_j : = G/K_j$ is a connected Lie group for all $j$. 
\begin{center}
\textbf{Claim: }For infinitely many $j$, the identity connected component of $K_{j}/K_{j+1}$  is non-trivial.
\end{center}
Assume by contradiction there is  $j_0\in \mathbb{N}$ such that $K_j/K_{j+1} $ is discrete for all $j \geq j_0$. Let $\delta > 0 $ be small enough so that $B^{H_{j_0}} (e, \delta) $ contains no non-trivial subgroups, and $B^{H_{j_0}} (e, \delta) \to H_{j_0}$ has no content. By Lemma \ref{lem:lift-neighborhood} and induction, the balls $B^{H_j } (e,\delta)$ contain no non-trivial subgroups for $j \geq j_0$. From (\ref{eq:glushkov}), there is $\ell_0 \in \mathbb{N}$ such that for all  $\ell \geq \ell_0$, we have $K_{\ell} \subset B^{G}(e, \delta)$, and 
\[ K_{\ell} / K_{\ell + 1} \subset B^{H_{\ell + 1}  }(e, \delta ) .      \]
This implies $K_{\ell} = K _{\ell + 1}$ for all $\ell \geq \ell_0$, which by (\ref{eq:glushkov}) means $K_{\ell_0} = 0$,  a contradiction.
\end{proof}

\subsection{Gromov--Hausdorff convergence}

\begin{definition}
\rm Let $X, Y$ be metric spaces. A function $f: X \to Y$ is called a \textit{global }$\varepsilon$\textit{-approximation} if 
\begin{itemize}
\item For all $x_1, x_2 \in X$, one has $\vert d(fx_1, fx_2) - d(x_1, x_2) \vert \leq \varepsilon$.
\item For all $y \in Y$, there is $x \in X$ with $d(fx,y) \leq \varepsilon$.
\end{itemize}
\end{definition}

\begin{definition}
\rm Let $(X,p), (Y,q)$ be pointed metric spaces. A function $f: X \to Y$ is called a \textit{pointed }$\varepsilon$\textit{-approximation} if $d(fp,q) \leq \varepsilon$ and
\begin{itemize}
\item For all $x_1, x_2 \in B^X(p, 2/\varepsilon )$, one has $\vert d(fx_1, fx_2) - d(x_1, x_2) \vert \leq \varepsilon$.
\item For all $y \in B^Y(q, 1/\varepsilon )$, there is $x \in B^X(p, 2/\varepsilon )$ with $d(fx,y) \leq \varepsilon$.
\end{itemize}
\end{definition}

\begin{definition}
\rm We say a sequence of pointed proper metric spaces $(X_n, p_n)$ \textit{converges in the pointed Gromov--Hausdorff sense} to a pointed proper metric space $(X,p)$ if there is a sequence of pointed $\varepsilon_n$-approximations $f_n : X_n \to X$ with $\varepsilon_n \to 0$ as $n \to \infty$. 

The functions $f_n$ above are called \textit{Gromov--Hausdorff approximations}. If a sequence $x_n \in X_n$ with $ \sup _{n} d(x_n, p_n) < \infty $  is such that $f_n (x_n) \to x \in X$, by an abuse of notation we say that $x_n$ \textit{converges} to $x$. 
\end{definition}

\begin{definition}\label{def:equivariant}
\rm Let  $(X_n,p_n)$ be a sequence of pointed proper metric spaces converging in the pointed Gromov--Hausdorff sense to a pointed proper metric space $(X,p)$, a sequence of Gromov--Hausdorff approximations $f_n : X_n \to X$, a sequence of closed groups $G_n \leq \iso(X_n)$, and a closed group $G \leq \iso (X)$. Equip $G_n $ with the metric $d_0^{p_n}$ and $G$ with the metric $d_0^p$ from (\ref{eq:d0}). We say that the sequence $G_n$ \textit{converges equivariantly} to $G$ if there is a sequence of Gromov--Hausdorff approximations $\varphi _n : G _ n \to G  $ such that for each $R > 0 $ one has 
\[  \lim_{n \to \infty} \sup_{g \in B^{G_n}(\id_{X_n}, R) } \sup_{ x \in B^{X_n}(p_n,R)}  d( f  _n( g x ), \varphi_n(g) ( f _n x ) ) = 0       . \]
\end{definition}
\begin{remark}\label{rem:equivariant-composition}
\rm Under the conditions of Definition \ref{def:equivariant}, it is easy to see that if $\diam (X_n / G_n) \to 0$, then $G$ acts transitively on $X$, and for all $R>0$ one has
\begin{equation}\label{eq:equivariant-composition}
\lim_{n \to \infty} \sup_{g,h \in B^{G_n}( \id_{X_n} , R )} d _0 ^p(  \varphi_n (  gh ) , \varphi_n(g) \varphi_n(h) ) = 0.
\end{equation}  
\end{remark}
\begin{remark}\label{rem:commutator-passes-to-limit}
\rm If for some $\varepsilon > 0 $, $N \in \mathbb{N}$, one has
\[ G_n^{(N)} \subset    \{ g \in \iso (X_n) \vert d(gx,x) \leq \varepsilon \text{ for all } x \in X_n \}           \]    
for $n$ large enough, then by repeated applications of (\ref{eq:equivariant-composition}), one gets
(cf. \cite[Corollary 2.1.4]{BFT})  
\[ G^{(N)} \subset    \{ g \in \iso (X) \vert d(gx,x) \leq \varepsilon \text{ for all } x \in X \} .  \]
\end{remark}
Isometry groups of proper spaces satisfy a compactness property \cite[Proposition 3.6]{FY92}.
\begin{theorem}\label{thm:equivariant-compactness}
\rm (Fukaya--Yamaguchi) Let  $(X_n,p_n)$ be a sequence of pointed proper metric spaces converging in the pointed Gromov--Hausdorff sense to a pointed proper metric space $(X,p)$, a sequence of Gromov--Hausdorff approximations $f_n : X_n \to X$, and a sequence of closed groups $G_n \leq \iso(X_n)$. Then, after taking a subsequence, the sequence $G_n$ converges equivariantly  to a closed group $G \leq \iso (X)$. 
\end{theorem}

\subsection{Constructing covering spaces}\label{CoverConstruction}

\begin{definition}
\rm Let $\varepsilon > 0$. We say that a covering map $\tilde{X} \to X$  of geodesic spaces is $\varepsilon$\textit{-wide} if for every  $x \in X$, the ball $B^X(x, \varepsilon ) $ is an evenly covered neighborhood of $x$. 
\end{definition}

\begin{definition}
\rm If $(X,p)$ is a pointed geodesic space and $\varepsilon > 0$, we denote by $G(X, \varepsilon ) $ the quotient of $\pi_1(X,p)$ by the (normal) subgroup generated by loops of the form $ \beta \ast  \gamma \ast \overline{\beta }  $, with $\beta (0) = p$, $\beta (1) = \gamma (0) $, and $\gamma$ a loop contained in an open ball of radius $\varepsilon$.
\end{definition}

 The following result is obtained via the standard construction of covering spaces   \cite[Theorem 77.1]{Munk}. 

\begin{lemma}\label{lem:cover-characterization}
\rm Let $(X,p)$ be a pointed proper geodesic space,  $G $ a group, and $\varepsilon > 0$. Then there is a surjective morphism $G(X, \varepsilon ) \to G $ if and only if there is a regular $\varepsilon$-wide covering $\tilde{X} \to X$ with Galois group $G $.
\end{lemma}

The following result by Sormani and Wei implies that if two geodesic spaces are sufficiently close in the Gromov--Hausdorff sense, then one can transfer regular covers from one to the other \cite[Theorem 3.4]{SWHau}.

\begin{theorem}\label{thm:cover-copy}
\rm (Sormani--Wei) Let $X, Y$ be proper geodesic spaces for which there is a global $\varepsilon / 100$-approximation between them. Then there is a surjective map $G(X, \varepsilon / 2 ) \to G(Y, \varepsilon) $. In particular, if there an $\varepsilon$-wide regular cover $\tilde{Y} \to Y$ with Galois group $G$, then there is a surjective morphism $\pi_1(X) \to G$.
\end{theorem}

Now we give a way to construct covering spaces from group actions (cf. \cite[Lemma A.19]{FY92}).

\begin{theorem}\label{thm:monodromy}
\rm  Let $(X,p)$ be a proper geodesic space and $\Gamma \leq \iso (X)$   a discrete group of isometries with $\diam (X/\Gamma) \leq  \rho  $ for some $\rho  > 0$. Define $B:= B(p, 2 \rho  )$ and
\[  S := \{ g \in \Gamma \vert d(gp, p) < 4 \rho  \} = \{ g \in \Gamma \vert gB \cap B \neq \emptyset \} . \]
Let $  \tilde{\Gamma } $ be the abstract group generated by $S$, with relations 
\begin{center}
$s= s_1 s_2$ in $ \tilde{\Gamma } $, whenever $s, s_1, s_2 \in S$ and $s= s_1s_2$ in $\Gamma$. 
\end{center}
If we denote the  canonical embedding $S  \hookrightarrow \tilde{\Gamma} $ as $(s \to s^{\sharp } )$, then there is a unique group morphism $\Phi :  \tilde{\Gamma } \to \Gamma$ that satisfies $\Phi (s^{\sharp }) = s$ for all $s \in S$. Equip $ \tilde{\Gamma } $ with the discrete topology, and consider the topological space
\begin{center}
$\tilde{X} := \left(  \tilde{\Gamma }  \times B \right) / \sim$,
\end{center}
where $\sim$ is the minimal equivalence relation such that 
\begin{equation}\label{eq:x-tilde-def}
 (gs^{\sharp}, x) \sim (g,s x)  \text{ whenever }s \in S, \,  x ,  sx \in B
\end{equation}
We obtain a continuous map $\Psi : \tilde{X} \to X$ given by
\begin{center}
$\Psi  (g, x) : = \Phi (g)(x)      .$
\end{center}
 Then $\Psi$ is a regular $\rho  $-wide covering map with Galois group $\kker (\Phi )$.
\end{theorem}
The proof of Theorem \ref{thm:monodromy} is obtained from a sequence of lemmas.
\begin{lemma}\label{relation-is-good}
\rm Let $(a,x), (b,y ) \in \tilde{\Gamma} \times B$. The following are equivalent:
\begin{itemize}
\item there is $s \in S$ with $b= as^{\sharp}$, $x=sy$.
\item there is $t \in S$ with $a=bt^{\sharp}$, $y=tx$.
\item $(a,x) \sim (b,y)$.
\end{itemize}
\end{lemma}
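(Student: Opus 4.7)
The plan is to introduce the single-step relation $R$ on $\tilde{\Gamma}\times B$ defined by condition (i)---equivalently, by condition (ii)---and show that $R$ is already an equivalence relation containing the generating relation of $\sim$, which then forces $R=\sim$ and establishes all three equivalences at once.

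First I will verify (i)$\Leftrightarrow$(ii). The set $S$ is symmetric, since $d(s^{-1}z_0,z_0)=d(z_0,sz_0)$, and contains $e$. Using the defining relations of $\tilde{\Gamma}$, the identity $e=e\cdot e$ in $\Gamma$ forces $e^{\sharp}=e^{\sharp}\cdot e^{\sharp}$, so $e^{\sharp}$ is the identity of $\tilde{\Gamma}$; then lifting $e=s\cdot s^{-1}$ yields $(s^{-1})^{\sharp}=(s^{\sharp})^{-1}$. Consequently, replacing $s$ by $s^{-1}$ converts condition (i) into condition (ii) and vice versa. The implication (i)$\Rightarrow$(iii) is also immediate: taking $g:=a$ and applying the generating relation of $\sim$ with the element $s$ from (i) and $v:=y$, one gets $(gs^{\sharp},v)=(b,y)$ and $(g,sv)=(a,x)$, and all the hypotheses $s\in S$, $v=y\in B$, $sv=x\in B$ are satisfied, so $(a,x)\sim(b,y)$.

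The main work is the implication (iii)$\Rightarrow$(i), which reduces to showing that $R$ is an equivalence relation on $\tilde{\Gamma}\times B$, because then $R$ contains the generator of $\sim$ and, by minimality, $\sim\subseteq R$. Reflexivity uses $s=e$, and symmetry uses $s\mapsto s^{-1}$ together with the inversion identity above. The main obstacle is transitivity: given $(a,x)\,R\,(b,y)$ via $s$ and $(b,y)\,R\,(c,z)$ via $s'$, I set $t:=ss'\in\Gamma$ and must show $t\in S$. This is exactly where the choice of radius $2\rho$ in the definition of $S$ against radius $\rho$ in the definition of $B$ pays off, since $tz=ss'z=sy=x\in B$ and $z\in B$ give
\[ d(tz_0,z_0)\leq d(tz_0,tz)+d(tz,z_0)=d(z_0,z)+d(x,z_0)\leq 2\rho. \]
Once $t\in S$ is known, the defining relation $t=s\cdot s'$ in $\Gamma$ lifts to $t^{\sharp}=s^{\sharp}(s')^{\sharp}$ in $\tilde{\Gamma}$, so $c=as^{\sharp}(s')^{\sharp}=at^{\sharp}$ and $x=tz$, which is exactly $(a,x)\,R\,(c,z)$. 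This closes the proof.
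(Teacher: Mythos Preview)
Your proof is correct and follows essentially the same line as the paper's. Both arguments hinge on the same computation: if $t\in\Gamma$ sends some $z\in B$ to another point of $B$, then $d(tz_0,z_0)\leq d(tz_0,tz)+d(tz,z_0)\leq 2\rho$, so $t\in S$ and the relation $t^{\sharp}=s^{\sharp}(s')^{\sharp}$ is available in $\tilde{\Gamma}$. The paper unfolds an arbitrary $\sim$-chain $(a,x)\sim(as_1^{\sharp},s_1^{-1}x)\sim\cdots$ and proves inductively that the partial products $s_1\cdots s_j$ lie in $S$; you instead package this as ``$R$ is transitive'' and invoke minimality of $\sim$, which is a slightly cleaner organization of the same idea.
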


\begin{proof}
The first two conditions are equivalent by taking $t $ to be $s^{-1}$, and they imply the third one by definition. Using the fact that the first two conditions are equivalent, the third condition implies that there is a sequence $s_1, \ldots , s_k \in S$ such that 
\[(a,x) \sim (as_1^{\sharp} , s_1^{-1}x) \sim  \ldots  \sim (as_1^{\sharp} \cdots s_k^{\sharp} , s_k^{-1} \cdots s_1^{-1} x ) = (b,y). \]
This implies that $ \left( s_1^{-1} \right) , \left(  s_2^{-1}s_1^{-1}  \right) , \ldots , \left(  s_k^{-1}\ldots s_1^{-1} \right) \in S$, allowing us to prove by induction on $j$ that $(s_1\cdots s_j)^{\sharp} s_{j+1}^{\sharp} = (s_1 \cdots s_{j+1}  )^{\sharp}$ in $\tilde{\Gamma }$. This implies the first condition by taking $s$ to be $(s_1 \cdots s_k) \in S$. 
\end{proof} 
Fix $U \subset X$ an  open ball of radius $\rho  $. Since $\Phi$ is surjective and $\diam (X/\Gamma ) \leq \rho  $, there is $g_0 \in \tilde{\Gamma}$ such that 
\[V : = \Phi(g_0^{-1})(U) \subset B^X(p, 2 \rho  ).\]
\begin{lemma}\label{lem:monodromy-i}
\rm  The preimage of $U$ is given by
 \begin{equation}\label{eq:preimage-u}
 \Psi ^{-1}(U) = \bigcup_{g \in \kker(\Phi)} \left( \bigcup_{s \in S} \left( \{ g_0gs^{\sharp}  \} \times \left(( s^{-1}V) \cap B   \right)     \right)     \right)  / \sim   .
\end{equation}
\end{lemma}
\begin{proof}
By direct evaluation, if  $g \in \kker (\Phi )$ and  $x \in V \cap sB$, then 
\[  \Psi (g_0gs^{\sharp}, s^{-1}x) = \Phi (g_0g)x = \Phi(g_0) x \in \Phi (g_0) (V) = U. \]
On the other hand, if $(h,x) \in \tilde{\Gamma} \times B$ is such that $\Phi(h) (x) \in U$, then $\Phi(g_0^{-1}h)(x) \in V$, and $\Phi(h) \in \Phi( g_0) S $, implying that $h = g_0gs^{\sharp}$ for some $g \in \kker (\Phi)$, $s \in S$. Also, $s(x) = \Phi ( g_0^{-1} g_0 s^{\sharp} )(x) = \Phi(g_0^{-1}h)(x) \in V$, proving that the class of $(h,x)$ belongs to the right hand side of (\ref{eq:preimage-u}).
\end{proof}
We say that a subset $A \subset \tilde{\Gamma } \times B$ is \textit{saturated} if it is a union of equivalence classes of the relation $\sim$. 
\begin{lemma}\label{lem:monodromy-ii}
\rm As $g$ ranges through $ \kker (\Phi )$, the sets 
\[   W_g : =   \bigcup_{s \in S} \left( \{ g_0gs^{\sharp}  \} \times \left(( s^{-1}V) \cap B   \right)     \right)     \subset \tilde{\Gamma } \times B    \]
are open, disjoint, and saturated.
\end{lemma}
\begin{proof}
 The fact that they are open is straightforward, since $(s^{-1}V) \cap B$ is open in $B$ for each $s \in S$. To prove that they are disjoint, assume that 
\[  (g_0g_1s_1^{\sharp},x  ) \sim (g_0 g_2 s_2 ^{\sharp} , y)     \]
  for some  $g_1, g_2 \in \kker(\Phi),$ $s_1s_2 \in S , $ $ x\in (s_1^{-1}V) \cap B , y \in (s_2^{-1}V ) \cap B . $ Lemma \ref{relation-is-good} implies that there is $t \in S$ with 
  \begin{equation}\label{eq:mono}
  g_0g_1s_1^{\sharp} t^{\sharp} = g_0g_2s_2^{\sharp}  , \text{ } x=ty . 
  \end{equation}
 By Taking $\Phi$ on both sides of the first equation, we get $s_2 = s_1t$, and hence $s_2^{\sharp} = s_1^{\sharp}t^{\sharp}$. Canceling this in (\ref{eq:mono}), we get $g_1=g_2$, proving that the sets $W_g$ are disjoint.

To prove that they are saturated, assume that for some $(h,x ) \in \tilde{\Gamma} \times B$, $g \in \kker (\Phi) $, $s \in S$, $y \in (s^{-1} V )\cap B$, we have $(h,x) \sim (g_0gs^{\sharp} , y)$. By Lemma \ref{relation-is-good}, there is $t \in S$ with 
\[  h = g_0gs^{\sharp}t^{\sharp}, \text{ } y = tx.        \]
This implies that $st (x)= s(y) \in V \subset  B$, hence $st \in S$ and $(st)^{\sharp}=s^{\sharp}  t ^{\sharp} $. Then 
\[ (h,x) \in  \{ g_0 g (st)^{\sharp} \} \times  \left( ((st)^{-1}V) \cap B \right) , \]
proving that $W_g$ is saturated.
\end{proof}
\begin{lemma}\label{lem:monodromy-iii}
\rm For each $g \in \kker (\Phi )$, the image of $W_g$ in $\tilde{X}$ is sent  homeomorphically via $\Psi$ onto $U$.
\end{lemma}
\begin{proof}
Surjectivity is straightforward, since $\Phi ( g_0 g ) (V) = U$. To check injectivity, assume that for some $s,t \in S$, $x\in (s^{-1}V) \cap B$, $y \in (t^{-1}V) \cap B$, we have
\[ \Phi(g_0gs^{\sharp})(x) = \Phi (g_0gt^{\sharp})(y).      \]
Then $x=s^{-1}ty \in B$, implying that $s^{-1}t \in S$, and consequently, $(s^{-1}t)^{\sharp} = (s^{\sharp})^{-1}t^{\sharp}$. Hence
\[   g_0gs^{\sharp} (s^{-1}t)^{\sharp} = g_0gt^{\sharp} , \text{ }x= (s^{-1}t)y,  \]
obtaining injectivity. To check that $\Psi \vert_{W_g}$ is open, take $\mathcal{O} \subset W_g$ open and saturated containing the class of 
\[(g_0g, x) \in  \{g_0g\} \times  V   . \]
Then $\Psi $ sends $ ( \{g_0g\} \times  V    ) \cap \mathcal{O} $ to an open neighborhood of $\Phi(g_0)(x)$. Since $(g_0g,x) $ was arbitrary, $\Psi \vert _{W_g} $ is open.
\end{proof}

\begin{proof}[Proof of Theorem \ref{thm:monodromy}:] 
By Lemmas \ref{lem:monodromy-i}, \ref{lem:monodromy-ii}, and \ref{lem:monodromy-iii}, $U$ is an evenly covered neighborhood. Since $U$ was arbitrary, $\Psi $ is a $\rho$-wide covering map.

From (\ref{eq:x-tilde-def}), one sees that left multiplication on $\tilde{\Gamma }$ on itself descends to an action $\Theta : \tilde {\Gamma } \to \iso (\tilde{X})$. For $g \in \kker (\Phi)$, $h \in \tilde {\Gamma }$, $x \in B$, we have
\[    \Psi( \Theta (g) (h,x) ) = \Psi ( gh,x ) = \Phi (gh)x = \Phi (h) x = \Psi (h,x), \]
so $\Theta \vert _{\kker (\Phi)}$  consists of deck transformations. From Lemma \ref{lem:monodromy-ii}, it is straightforward to check that for each $g_0 \in \tilde{\Gamma }$, the group $\kker (\Phi) $ acts on the set of $W_g$'s freely and transitively. This shows that $\Psi$ is regular and $\kker (\Phi )$ is its Galois group, finishing the proof of Theorem  \ref{thm:monodromy}.
\end{proof}

\subsection{Local groups}

In this section we present the elementary theory of local groups and approximate groups we will use. We refer the reader to \cite[Appendix B]{BGT}  for proofs and further discussion.

\begin{definition}
\rm Let $(G,e)$ be a pointed topological space. We say that $G$ is a \textit{local group}, if there are continuous maps $( )^{-1}: G \to G$ and $\cdot : \Omega  \to G$ for some $ \Omega \subset G \times G$ such that
\begin{itemize}
\item $\Omega$ is an open set containing $(G \times \{ e\} ) \cup ( \{ e \} \times G  )  $.
\item For all $g \in G$, we have  $g \cdot e = e \cdot g = g$.
\item For all $g \in G$, we have $(g,g^{-1}), (g^{-1},g)\in \Omega$ and $g \cdot g^{-1} = g^{-1} \cdot g = e$.
\item For all $g,h,k \in G$ such that $(g,h) , (gh,k), (h,k), (g, hk) \in \Omega$, we have $g (hk)= (gh)k$.
\end{itemize}
\end{definition}

\begin{definition}
\rm We say that a local group $G$ is a \textit{local Lie group} if it is a smooth manifold, and the maps $( )^{-1}: G \to G$ and $\cdot : \Omega  \to G$ are smooth.
\end{definition}

\begin{definition}
\rm Let $G$ be a local group. We say that a subset $A \subset G$ is \textit{symmetric} if $e \in A$ and $g^{-1} \in A$ for all $g \in A$. 
\end{definition}

\begin{definition}
\rm Let $G$ and $H$ be two local groups. We say a continuous function $\phi : G \to H$ is a morphism if the following holds:
\begin{itemize}
\item $\phi (e) = e$.
\item For all $g \in G$, we have $\phi (g^{-1}) = \left[ \phi (g)   \right] ^{-1}$.
\item If $g,h \in G$ are such that $g\cdot h$ is defined in $G$, then $\phi (g) \cdot \phi (h) $ is defined in $H$, and $\phi (g\cdot h ) = \phi (g) \phi (h)$. 
\end{itemize}
\end{definition}

\begin{definition}
\rm Let $(G,e$) be a local group, and $U\subset G$ a symmetric subset. We say that $U$ is a \textit{restriction} of $G$ if when restricting $( )^{-1} $ to $U$, and $\cdot : \Omega \to G$ to $ \{ (g,h) \in \Omega \cap U \times U \vert gh \in U \}  $, we obtain a local group structure on $U$.
\end{definition}

\begin{definition}
\rm Let $G$ be a local group, and $g_1, \ldots , g_m \in G$. We say that the product $g_1 \cdots g_m$ is \textit{well defined in} $G$ if for each $1 \leq j\leq k \leq m$, we can find an element $g_{[j,k]} \in G$ such that  
\begin{itemize}
\item For all $j \in \{ 1, \ldots , m \} $, we have $g_{[j,j]}= g_j$ 
\item For all $1 \leq j \leq k < \ell \leq m$, the pair $\left(  g_{[j,k]} , g_{[k+1,\ell ]}  \right) $ lies in $\Omega$, and $g_{[j,k]} \cdot g_{[k+1,\ell ]} = g_{[j, \ell ]}$. 
\end{itemize}
For sets $A_1, \ldots , A_m \in G$, we say that the product $A_1\cdots A_m$ is \textit{well defined} if for all choices of $g_j \in A_j$, the product $g_1 \cdots g_m$ is well defined.
\end{definition}
\begin{definition}
\rm Let $G$ be a local group. We say that a subset $A \subset G$ is a \textit{multiplicative set} if it is symmetric, and $A^{200}$ is well defined in $G$.
\end{definition}
\begin{definition}
\rm We say that a local group $G$ is  \textit{cancellative} if the follwing holds:
\begin{itemize}
\item For all $g,h,k \in G$ such that $gh$ and $gk$ are well defined and equal to each other, we have $h=k$.
\item For all $g,h,k \in G$ such that $hg$ and $kg$ are well defined and equal to each other, we have $h=k$.
\item For all $g,h \in G$ such that $gh $ and $h^{-1}g^{-1}$ are well defined, then $(gh)^{-1} = h^{-1}g^{-1}$.
\end{itemize} 
\end{definition}

\begin{definition}
\rm Let $G, G^{\prime}$ be local groups such that $G^{\prime}$ is a restriction of $G$. We say that $G^{\prime}$ is a \textit{sub-local group} of $G$ if there is an open set $V\subset G$ containing $G^{\prime}$ with the property that for all $a,b \in G^{\prime}$ such that $ab$ is well defined in $V$, then $ab \in G^{\prime}$.  If $V$ also satisfies that for all $a \in G^{\prime} $, $b \in V$ such that $bab^{-1}$ is well defined in $V$, then $bab^{-1}\in G^{\prime}$, we say that $G^{\prime}$ is a \textit{normal} sub-local group of $G$, and $V$ is called a \textit{normalizing neighborhood} of $G^{\prime}$.
\end{definition}

\begin{lemma}\label{lem:local-quotient}
\rm Let $G$ be a cancellative group and $H$ be a normal sub-local group with normalizing neighborhood $V$. Let $W \subset G$ be an open symmetric subset such that $W^6 \subset V$. Then there is a cancellative local group $W/H $ equipped with a surjective morphism $\phi : W \to W / H$ such that, for all $g,h \in W$, one has $\phi (g) = \phi (h)$ if and only if $gh^{-1}\in H$, and for any $E \subset W / H$, one has that $E$ is open if and only if $\phi^{-1}(E)$ is open. 
\end{lemma}

\begin{definition}\label{def:a-group}
\rm Let $A$ be a finite symmetric subset of a  multiplicative set and $C \in \mathbb{N}$. We say that $A$ is a $C$\textit{-approximate group} if $A^2$ can be covered by $ C$ left translates of $A$.
\end{definition}

\begin{definition}\label{def:strong-a-group}
\rm Let $A$ be a $C$-approximate group for some $C \in \mathbb{N}$. We say that $A$ is a  \textit{strong }$C$\textit{-approximate group}  if there is a symmetric set $S \subset A$ satisfying the following:
\begin{itemize}
\item $\left( \{ asa^{-1} \vert a \in A^4, s \in S\}  \right) ^{10^3C^3} \subset A$. 
\item If $g, g^2, \ldots , g^{1000} \in A^{100}$, then $g \in A$.
\item If $g, g^2 , \ldots , g^{10^6 C^3} \in A$, then $g \in S$.
\end{itemize}
\end{definition}  

\begin{definition}
\rm Let $A $ be a subset of a multiplicative set $G$. For $g\in G$, we define the escape norm as
\begin{center}
$\Vert g \Vert_{A} : = \inf \left\{  \dfrac{1}{m+1} \bigg| \text{ } e, g, g^2, \ldots , g^m \in A   \right\}$.  
\end{center}
\end{definition} 
In strong approximate groups, the escape norm satisfies really nice properties \cite[Theorem 8.1]{BGT}. 
\begin{theorem} \label{thm:gleason}
 \rm (Gleason--Breuillard--Green--Tao) For each $C>0$, there is $M>0$ such that if $A$ is a strong $C$-approximate group and $g_1, g_2, \ldots , g_k \in A^{10}$,  then
\begin{enumerate}[label=\roman*]
\item $\Vert g_1g_2 \cdots g_k \Vert_A \leq M \sum_{j=1}^k \Vert  g_j  \Vert_A $.\label{thm:gleason-product}
\item $\Vert g_2g_1g_2^{-1} \Vert_A \leq 10^3 \Vert g_1 \Vert_A$.\label{thm:gleason-conjugate}
\item $\Vert [g_1,g_2]\Vert _A \leq M \Vert  g_1 \Vert_A \Vert g_2 \Vert_A $.\label{thm:gleason-commutator}
\end{enumerate}
\end{theorem}

\begin{definition}
 \rm Let $G$ be a local group, $u_1, u_2, \ldots , u_r \in G$, and $N_1, N_2, \ldots , N_r $ $\in \mathbb{R}^+$. The set $P(u_1, \ldots , u_r ; N_1, \ldots , N_r)$ is defined as the set of words in the $u_i$'s and their inverses such that the number of appearances of $u_i$ and $u_i^{-1}$ is not more than $N_i$. We say that  $P(u_1, \ldots , u_r ; N_1, \ldots , N_r)$ is well defined if every word in it is well defined in $G$. When that is the case, we call it a \textit{progression of rank} $r$ (a progression of rank $0$ is defined to be the trivial subgroup). We say a progression $P(u_1, \ldots , u_r ; N_1, \ldots , N_r)$  is a \textit{nilprogression in $C$-normal form} for some $C>0$ if it also satisfies the following properties:
\begin{enumerate}[label=\textbf{N.\arabic*}]
\item  For all $1 \leq i \leq j \leq r$, and all choices of signs, we have
\begin{center}
$  [ u_i^{\pm 1} , u_j^{\pm 1} ]  \in P \left(  u_{j+1} , \ldots , u_r ; \dfrac{CN_{j+1}}{N_iN_j} , \ldots ,   \dfrac{CN_r}{N_iN_j}  \right).     $
\end{center}\label{condition:n1}
\item The expressions $ u_1 ^{n_1} \ldots u_r^{n_r} $ represent distinct  elements as $n_1, \ldots , n_r$ range over the integers with  $\vert n_1 \vert \leq  N_1/C , \ldots , \vert n_r \vert \leq  N_r/C$.\label{condition:n2}
\item One has 
\[  \frac{1}{C} (2\lfloor N_1 \rfloor + 1 ) \cdots ( 2\lfloor N_r \rfloor + 1 ) \leq \vert P \vert  \leq C (2 \lfloor N_1 \rfloor + 1 ) \cdots ( 2\lfloor N_r \rfloor + 1 ) .               \]\label{condition:n3}
\end{enumerate} 
For a nilprogression $P$ in $C$-normal form, and $\varepsilon \in (0,1)$, the progression  $P( u_1, \ldots , u_r ; $ $ \varepsilon N_1, \ldots , \varepsilon N_r   )$ also satisfies \ref{condition:n1} and \ref{condition:n2}, and we denote it by $\varepsilon P$. We define the \textit{thickness} of $P$ as the minimum of $N_1, \ldots , N_r$ and we denote it by $ \thi (P)$. The set $\{ u_1 ^{n_1 }\ldots u_r^{n_r} \vert \vert n_i \vert \leq N_i/C  \}$ is called the \textit{grid part of }$P$, and is denoted by $G(P)$.
\end{definition}

\begin{definition}
\rm Let $P(u_1 , \ldots , u_r ; N_1 , \ldots , N_r)$ be a nilprogression in $C$-normal form with  $\thi (P )\geq C$. Set $\Gamma_P$ to be the abstract group generated by $\gamma_1, \ldots , \gamma_r$ with relations $[\gamma _j , \gamma_k]  =   \gamma_{k+1}^{\beta_{j, k}^{ k+1}} \ldots \gamma_r^{\beta_{j, k}^r} $ whenever $j < k $, where $[u _j , u_k]  =   u_{k+1}^{\beta_{j, k}^{ k+1}} \ldots u_r^{\beta_{j, k}^{ r}} $ and $\vert \beta_{j, k}^{l } \vert \leq \frac{CN_l}{N_j N_k}$. We say that $P$ is \textit{good} if  each element of $\Gamma _P$ has a unique expression of the form
\begin{center}
$ \gamma_1^{n_1}\ldots \gamma_r^{n_r},  $ with $n_1, \ldots, n_r \in \mathbb{Z}$.
\end{center}
\end{definition}

\begin{theorem}\label{thm:malcev-embedding}
\rm  (Malcev) For each $r \in \mathbb{N}$, $C > 0 $, there is $\varepsilon > 0 $ such that the following holds. Let $P(u_1 , \ldots , u_r ;$ $ N_1 , \ldots , N_r)$ be a nilprogression in $C$-normal form. If $\thi (P)$ is large enough depending on $r$ and $ C$, then $P$ is good and the map $u_j \to \gamma_j$ extends to a product preserving embedding $ \sharp : G(\varepsilon P) \to \Gamma_P.$ For $A \subset G(\varepsilon P)$, we will denote its image under this embedding by $A^{\sharp}$.
  Furthermore, there is a quasilinear polynomial group structure (see Definition \ref{def:quasil})
\[ Q: \mathbb{R}^{r}\times \mathbb{R}^r \to \mathbb{R}^r \]
of degree $\leq d(r)$ such that the multiplication in $\Gamma_P$ is given by 
\begin{center}
$ \gamma_1^{n_1}\ldots \gamma_r^{n_r} \gamma_1^{m_1}\ldots \gamma_r^{m_r} = \gamma_1^{(Q(n,m))_1}\ldots \gamma_r^{(Q(n,m))_r}   $ for $n,m \in \mathbb{Z}^r,$
\end{center}
so $\Gamma_P$ is isomorphic, via $\gamma_j \to  e_j$, to the lattice $(\mathbb{Z}^r, Q\vert_{\mathbb{Z}^r \times \mathbb{Z}^r} )$. $Q$ is called the \textit{Malcev polynomial} of $P$, and $(\mathbb{R}^r, Q)$ the \textit{Malcev Lie group} of $P$. 
\end{theorem}

\begin{proof}
In \cite[Lemma C.3]{BGT} it is shown that provided $\thi (P)$ is large enough, $G(\varepsilon P)$ embeds in a product preserving way to $\Gamma_P$ (see also \cite[Section 4.6]{BK}). In \cite[Section 5.1]{BK} it is shown that if $\thi (P)$ is large enough, the product in $\Gamma_{P}$ is given by a polynomial $Q$ that extends to the desired group structure in $\mathbb{R}^r$.
\end{proof}

\subsection{Ultralimits}

In this section we discuss the ultrafilter tools we will use during the proof of Theorem \ref{thm:part-iii}. We refer the reader to \cite[Section 2.1]{BFT} and  \cite[Appendix A]{BGT} for proofs and further discussions.

\begin{definition}
 \rm Let $\wp (\mathbb{N})$ denote the power set of the natural numbers and $\alpha : \wp (\mathbb{N}) \to \{ 0,1 \}$ a function. We say that $\alpha$ is a \textit{non-principal ultrafilter} if it satisfies:
\begin{itemize}
\item $\alpha (\mathbb{N}) = 1$.
\item $\alpha (A \cup B )= \alpha (A) + \alpha (B)$ for all disjoint $A , B  \subset \mathbb{N}$.
\item $\alpha (F )=0$ for all finite $F \subset \mathbb{N}$.
\end{itemize}
\end{definition}
 Using Zorn's Lemma it is not hard to show that non-principal ultrafilters exist. We will choose one $(\alpha )$ and fix it for the rest of this paper.  For a property $P : \mathbb{N}\to \{0,1\}$, if $\alpha (  P^{-1} (1)  ) =1$ we  say that ``$P(n)$ holds for $\alpha$-large enough $n$''. 

 \begin{definition}
 \rm Let $A_n$ be a sequence of sets. In the product 
\[A^{\prime} : = \prod _{n=1}^{\infty} A_n,\]
we say that two sequences $\{ a_n \} _n , \{  a_n^{\prime}   \} _n$ are $\alpha $\textit{-equivalent} if 
\[ \alpha \left( \{  n \vert a_n = a_n^{\prime}   \} \right) =1.  \]
The set $A^{\prime}$ modulo this equivalence relation is called the \textit{ultraproduct} of the sets $A_n$ and is denoted by
\[ \textbf{A}  :  =   \lim\limits_{n \to \alpha} A_n.      \]
If the sets $A_n$ are local groups, then $\textbf{A}$ inherits a local group structure given by $\{a_n \}_n \ast \{ a_n^{\prime} \} _n : = \{ a_n a_n^{\prime } \}_n $, whenever $ \{ a_na_n^{\prime } \}_n \in \textbf{A}$.
\end{definition}

\begin{definition}
\rm If $A_n = \mathbb{R}$ for each $n$, then $\textbf{A}$ is denoted by ${}^{\ast}\mathbb{R}$ and its elements are called \textit{non-standard real numbers}. Elements of $\mathbb{R}$ are called \textit{standard real numbers}, and there is a natural embedding $\mathbb{R} \hookrightarrow {}^{\ast} \mathbb{R}$ whose image consists of the constant sequences.

For $x= \{ x_n\}_n$, $y=\{ y_n \}_n$ non-standard real numbers, we say 
\begin{itemize}
\item $x \leq y$ if $\alpha \left( \{ n \in \mathbb{N} \vert x_n \leq y_n \}   \right) =1$.
\item $x = O (y)$ if there is $C \in \mathbb{R}$ such that $x \leq Cy$.
\item $x = o(y)$ if for all $c \in \mathbb{R}$ we have $x \leq cy$.  
\item $x$ is \textit{bounded} if $x = O (1)$.
\item $x$ is \textit{infinitesimal} if $x = o (1)$.
\end{itemize}
\end{definition}

\begin{definition}
\rm   Let $ x_n  $ be a sequence in a metric space $X$. We say that the sequence \textit{ultraconverges} to a point $x_{\infty} \in X$ if for every $\varepsilon > 0$, 
\[ \alpha \left(  \{ n  \vert d(x_n, x_{\infty}) < \varepsilon  \}  \right) = 1  .   \] 
If this is the case, the point $x_{\infty} $ is called the \textit{ultralimit} of the sequence, and we write  $x_n \xrightarrow{\alpha} x_{\infty}$ or
\[ \lim\limits_{n \to \alpha} x_n = x_{\infty}. \]
\end{definition}
It is easy to show that if a sequence has an ultralimit, then it is unique. Furthermore, if $X$ is compact, then any sequence in $X$ ultraconverges.

\begin{definition}\label{def:ultra-strong}
\rm  Let $A_n$ be a sequence of finite multiplicative sets. If there is a $C \in \mathbb{N}$ such that $A_n$ is a (strong) $C$-approximate group for $\alpha$-large enough $n$, we say that the ultraproduct $\textbf{A}= \lim_{n \to \alpha} A_n $ is a \textit{(strong) ultra approximate group}. If for $\alpha$-large enough $n$, the approximate group $A_n$ does not contain non-trivial subgroups, we say that $\textbf{A}$ is an \textit{NSS} (no small subgroups) ultra approximate group.
\end{definition}
\begin{definition}
\rm  For subsets $A^{\prime}_n \subset A_n^4$ with the property  $(A^{\prime}_n)^4 \subset A_n^4$, we say that the ultraproduct $\textbf{A}^{\prime}=\lim_{n \to \alpha}   A^{\prime}_n $ is a \textit{sub-ultra approximate group} of $\textbf{A}$ if it is an ultra approximate group, and there is a constant $C^{\prime}\in \mathbb{N}$ such that $A_n$ can be covered by $C^{\prime}$ many translates of $A_n^{\prime}$ for $\alpha$-large enough $n$.
\end{definition}
\begin{definition}\label{def:model}
\rm Let $\textbf{A}=\lim_{n \to \alpha} A_n$ be an ultra approximate group. A \textit{good Lie model} for $\textbf{A}$ is a connected local Lie group $\Gamma $, together with a morphism $\sigma: \textbf{A}^8 \to \Gamma $ satisfying:
\begin{enumerate}[label=\textbf{M.\arabic*}]
\item The image $\sigma (\textbf{A} ) \subset \Gamma $ is precompact.\label{def:model-pre-compact}
\item There is an open neighborhood $U_0 \subset \Gamma $ of the identity with $U_0 \subset \sigma (\textbf{A}) $ and $\sigma^{-1}(U_0) \subset \textbf{A}$.\label{def:model-u0}
\item For $F \subset U \subset U_0$ with $F $ compact and $U$ open, there is an ultraproduct $\textbf{A}^{\prime}= \lim_{n \to \alpha}  A_n^{\prime}$ of finite sets $A_n^{\prime} \subset A_n$ with $\sigma^{-1}(F ) \subset \textbf{A}^{\prime} \subset \sigma^{-1}(U)$.\label{def:model-approximation}
\end{enumerate} 
\end{definition}

\begin{definition}
\rm  Let $P_n$ be a sequence of sets. If for $\alpha$-large enough $n$, $P_n$ is a nilprogression of rank $r$ in $C$-normal form for some $r \in \mathbb{N}$, $C  > 0 $, independent of $n$, we say that the ultraproduct $ \textbf{P} = \lim_{n \to \alpha}  P_n$ is an \textit{ultra nilprogression of rank $r$ in $C$-normal form}. We denote $\lim_{n \to \alpha}  \varepsilon P_n$ as $\varepsilon \textbf{P}$. If $(\thi (P_n))_n$ is unbounded, we say that $\textbf{P}$ is a \textit{non-degenerate} ultra nilprogression. The ultraproduct $G(\textbf{P}) :=\lim_{n \to \alpha}  G(P_n)$ is called the \textit{grid part of }$\textbf{P}$.
\end{definition}

\subsection{Ultraconvergence of polynomials}

\begin{definition}
\rm  Let $  Q_n : \mathbb{R}^{k} \to \mathbb{R}^{\ell} $ be a sequence of polynomials of bounded degree. We say that the sequence converges \textit{well} to a polynomial $Q: \mathbb{R}^{k} \to \mathbb{R}^{\ell}$ if the sequences of coefficients of $Q_n$ ultraconverge to the corresponding coefficients of $Q$.
\end{definition}

\begin{lemma}\label{lem:well-finite}
\rm  For each $d\in \mathbb{N}$, there is $N_0 \in \mathbb{N}$ such that the following holds. Let $I_{N_0}:= \{ -1, \ldots,\frac{-1}{N_0}, 0, \frac{1}{N_0}, \ldots , 1 \} $, and assume we have polynomials $Q_n, Q: \mathbb{R}^{k} \to \mathbb{R}^{\ell}$ of degree $\leq d$ such that $Q_n(x) \xrightarrow{\alpha} Q(x) $ for all $x\in \left( I_{N_0} \right)^{\times k}$. Then $Q_n$ converges well to $Q$. 
\end{lemma}

\begin{proof}
Working on each coordinate, we may assume that $\ell =1$. We proceed by induction on $k$, the case $k =1$ being elementary Lagrange interpolation. Name the variables $x_1, \ldots , x_{k}$. Since
\[ \mathbb{R}[x_1, \ldots , x_{k}]  = (\mathbb{R}[x_1])[x_2, \ldots , x_{k }],  \]
 we can consider the polynomials $Q_n, Q$, as polynomials $\tilde{Q}_n, \tilde{Q}$, in the variables $x_2, \ldots, x_{k}$ with coefficients in $\mathbb{R}[x_1]$. 

If $Q_n(x) \xrightarrow{\alpha} Q(x) $ for all $x\in \left( I_{N_0}\right)^{\times k}$, we would have $\tilde{Q}_n(q, x^{\prime}) \xrightarrow{\alpha} \tilde{Q}(q,x^{\prime})$ for all $q \in I_{N_0}$ and $x^{\prime} \in  (  I_{N_0})^{\times (k -1)}$. By the induction hypothesis, if $N_0$ was large enough, depending on $d$, the coefficients of $\tilde{Q}_n $, which are polynomials in $\mathbb{R}[x_{1}]$, ultraconverge to the coefficients of $\tilde{Q}$ whenever $x_{1}\in I_{N_0}$. By the case $k =1$, if $N_0$ was large enough, the coefficients of $Q_n$ ultraconverge to the coefficients of $Q$.
\end{proof}

\begin{lemma}\label{lem:well-algebra}
\rm Let $Q_n: \mathbb{R}^r \times \mathbb{R}^r \to \mathbb{R}^r$ be a sequence of polynomial group structures in $\mathbb{R}^r$ of bounded degree. Assume $Q_n $ converges well to a polynomial group structure $Q: \mathbb{R}^r \times \mathbb{R}^r  \to \mathbb{R}^r$. Then the corresponding sequence of Lie algebra structures on $\mathbb{R}^r$ converges well to the Lie algebra structure of $Q$.
\end{lemma}

\begin{proof}
This follows from the fact that the structure coefficients of the Lie algebras depend continuously on the derivatives of $Q_n$, which by hypothesis, ultraconverge to the corresponding derivatives of $Q$.
\end{proof}

\begin{definition}\label{def:quasil}
\rm For $x = (x_1, \ldots , x_r) \in \mathbb{R}^r$, we define its \textit{support} as
\[   \text{supp} ( x ) := \{ i \in \{ 1, \ldots , r \} \vert x_i \neq 0     \} . \]
For $x,y \in \mathbb{R}^r$, we say that $x \preceq y$ if $i \leq j$ for every $i \in \text{supp} (x) $, $j \in \text{supp} (y) $. We say that a polynomial group structure $Q : \mathbb{R}^r \times \mathbb{R}^r\to \mathbb{R}^r$ is \textit{quasilinear} if 
\[ Q(x,y) = Q(x, 0) + Q(0,y) = x+y\]
when $x \preceq y$.
\end{definition}

Note that for any quasilinear group structure $Q:\mathbb{R}^r \times \mathbb{R}^r \to \mathbb{R}^r$, the coordinate axes are one-parameter subgroups, and the exponential map 
\[ \exp : \Tan _0 \mathbb{R}^r = \mathbb{R}^r \to \mathbb{R}^r \]
 is the identity when restricted to such axes. Moreover, by the Baker--Campbell--Hausdorff formula, for $x= (x_1, \ldots , x_r) \in \mathbb{R}^r$, the expression
 \begin{align*}
 \log (  x  ) & =   \log \left(  x_{ 1} e_1 + \ldots + x_{i}e_r     \right) \\
 & =  \log \left(    ( x_{ 1} e_1 ) \ldots ( x_{r}e_r)     \right) \\
  & =  \log   (   \exp (  \log  (  x_{1}e_1  ) ) \ldots   \exp( \log (   x_{r}e_r )   )   )      \\
 & =  \log  (  \exp  (x_{1 }e_1)  \ldots \exp( x_{r}e_r) )   
 \end{align*}
is a polynomial on the variables $x_1, \ldots , x_r$, and its coefficients depend continuously on the structure coefficients of the Lie algebra associated to $(\mathbb{R}^r,  Q)$. This, together with Lemma \ref{lem:well-algebra}, implies the following result.

\begin{lemma}\label{lem:log-continuous}
\rm Consider quasilinear polynomial nilpotent group structures $Q_n , Q : \mathbb{R}^r\times \mathbb{R}^r \to \mathbb{R}^r$ of bounded degree. Let $\log_n ,\log : \mathbb{R}^r \to \mathbb{R}^r = \Tan _0 \mathbb{R}^r$ denote the logarithm maps for the group structures $Q_n$ and $Q$, respectively. Assume the sequence $Q_n$ converges well to $Q$, and a sequence $x_n \in \mathbb{R}^r$ ultraconverges to a point $x \in \mathbb{R}^r$. Then
\[ \lim\limits_{n \to \alpha} \log_n (x_n) = \log (x).  \]
\end{lemma}

\section{Nilpotent groups of isometries}\label{sec:part-i}

In this section, we begin the proof of Theorem \ref{thm:main-theorem} with the following result.

\begin{theorem}\label{thm:part-i}
\rm   Let $(X_n,p_n)$ be a sequence of almost homogeneous spaces that converges in the pointed Gromov--Hausdorff sense to a space $(X,p)$. Then $X$ is isometric to a nilpotent locally compact group equipped with an invariant metric.
\end{theorem}

 By hypothesis, there are discrete groups of isometries $G_n \leq \iso (X_n)$ with $\diam (X_n/G_n) \to 0$. To prove Theorem \ref{thm:part-i} we first reduce it to the case when the groups $G_n$ are almost nilpotent.

\begin{lemma}\label{lem:almost-nilpotent}
\rm Under the hypotheses of Theorem \ref{thm:part-i}, there are discrete groups of isometries $G_n^{\prime } \leq \iso (X_n)$ with $\diam (X_n/G_n^{\prime})\to 0$, satisfying that for each $\varepsilon > 0$, there is $N = N(\varepsilon ) \in \mathbb{N} $ such that 
\[    ( G_n ^{\prime})^{(N)} \subset \{ g \in G_n ^{\prime } \vert d(gx,x) \leq \varepsilon \text{ for all } x \in X_n \}           \]
for $n$ large enough.
\end{lemma}

This Lemma is a consequence of the following result, which is one of the strongest versions of the Margulis Lemma. It states that if a sufficiently large ball in a Cayley graph can be covered by a controlled number of balls of half its radius, then the corresponding group is virtually nilpotent.

\begin{theorem}\label{thm:margulis}
\rm (Breuillard--Green--Tao) For $C\in \mathbb{N}$, there is $N(C) \in \mathbb{N}$ such that the following holds: Let $A$ be a finite symmetric subset of a group $G$, which is in turn generated by a finite symmetric set $S$. If $S^N\subset A$, and $A$ is a $C$-approximate group, then there is a subgroup $ G^{\prime} \leq G$ with
\begin{itemize}
\item $[G:G^{\prime}]\leq N$
\item $\left( G^{\prime}\right)^{(N)} \subset A^4 $
\end{itemize} 
\end{theorem}

\begin{proof}
By \cite[Corollary 11.2]{BGT} combined with \cite[Remark 11.4]{BGT}, if $N (C)$ is large enough, $S^N \subset A$, and $A$ is a $C$-approximate group, then there are subgroups $F \triangleleft G^{\prime } \leq G$ with $F \subset A^4$, $[G:G^{\prime } ] \leq N$, such that $G^{\prime}/ F$ is nilpotent of step $\leq N$. Since $(G^{\prime })^{(N)}F / F = (G^{\prime}/F)^{(N)} = \{ e _{G^{\prime} / F} \} $, the result follows. 
\end{proof}

\begin{proof}[Proof of Lemma \ref{lem:almost-nilpotent}:] Fix $k \in \mathbb{N}$.   Since $X$ is proper, there is $C \in \mathbb{N} $ such that $B(p,3/k)$ can be covered by $C$ balls of radius $1/k$. That is, there are $\{ x_1, \ldots , x_{C} \} \in X$ such that 
\begin{equation}\label{eq:doubling-0}
   B(p , 3 /k) \subset \bigcup_{j=1}^{C}  B(x_j, 1/k).  
\end{equation}
Since $\diam (X_n/G_n) \to 0$, for $j \in \{ 1, \ldots , C \} $ there are $g_{j,n} \in G_n$ such that $g_{j,n}p_n$ converges to $x_j$ for each $j$. For each $n \in \mathbb{N}$, define
\[ A_{k,n} : =  \{ g \in G_n \vert   d(gp_n,p_n) \leq 1/k  \}   ,   \]
which is clearly finite and symmetric.
\begin{center}
\textbf{Claim:} $A_{k,n}^2 \subset \bigcup _{j=1}^{C} g_{j,n} A_{k,n} $ for $n$ large enough.
\end{center}
Otherwise, after passing to a subsequence, there are $h_n \in A_{k,n}^2  \backslash  \bigcup _{j=1}^C g_{j,n} A_{k,n} $. By Lemma \ref{lem:short-powers}, after passing again to a subsequence, we can assume $h_n p_n $ converges to a point $ x_0  \in B(p, 3/k)$. By (\ref{eq:doubling-0}), there is $j_0 \in \{ 1, \ldots , C \}$ such that $ x_0 \in B^X( x_{j_0} , 1/k ) $.   Set $u_n : = g_{j_0, n}^{-1}h_n$ and compute
\[ d(u_n p_n , p_n )  = d(g_{j_0,n}p_n , h_np_n )  \to d(x_{j_0}, x_0)  < 1/k. \]
Then $u_n \in A_{k,n}$ for $n$ large enough, hence $h_n = g_{j_0, n}u_n \in g_{j_0,n} A_{k,n}$, a contradiction.

Let  $N_k : = N(C) \in \mathbb{N}$ be given by Theorem \ref{thm:margulis}, and set
\[  S_n : = \{ g \in G_n \vert d(gp_n,p_n) \leq 3 \diam (X_n/G_n) \},           \]
 which  generates $G_n$ by  Lemma \ref{lem:short-generators}. As $\diam (X_n / G_n ) \to 0$, by Lemma \ref{lem:short-powers} one has $S_n^{N_k} \subset A_{k,n}$ for $n$ large enough and Theorem \ref{thm:margulis} applies. Using also Lemma  \ref{lem:diameter-subgroup} we deduce there are subgroups $G_{k,n} \leq G_n$ with  $\diam (X_n / G_{k,n}) \to 0 $ and
 \begin{equation}\label{eq:anil0}
 G_{k,n}^{(N_k)} \subset \{ g \in G_{k,n} \vert d(gp_n,p_n) \leq 4/k \}
\end{equation}   
for $n$ large enough. By Lemma \ref{lem:normal-stabilizers}, we can upgrade (\ref{eq:anil0}) to
\begin{equation}\label{eq:anil1}
   G_{k,n}^{(N_k)} \subset \{ g \in G_{k,n} \vert d(gx,x) \leq 5/k \text{ for all }x \in X_n \}         
\end{equation}
for $n$ large enough. Replacing $G_n$ by $G_{k,n} $, we can assume the sequence $G_n$ itself satisfies (\ref{eq:anil1}). Performing the above construction for each $k \in \mathbb{N}$, one obtains the desired groups $G_n^{\prime} : = G_{k(n),n}$ via a diagonal procedure with $k (n) \to \infty$ slowly enough.
\end{proof}

\begin{remark}
\rm  From the proof of Lemma \ref{lem:almost-nilpotent}, it follows that if $C$ can be taken independent of $k$ (for example if $X$ is a Riemannian manifold), then $N$ can be taken independent of $\varepsilon $ and moreover $[G_i : G_i^{\prime } ] \leq N$ for all $i$. 
\end{remark}

\begin{proof}[Proof of Theorem \ref{thm:part-i}:]
Let $G_n^{\prime}$ be given by  Lemma \ref{lem:almost-nilpotent}. After passing to a subsequence, the groups $G_n^{\prime}$ converge equivariantly to a group $\Gamma \leq \iso (X)$ acting transitively on $X$. By Theorem \ref{thm:gleason-yamabe}, there is an open subgroup $\mathcal{O} \leq \Gamma $ with the property that for any  neighborhood $U \subset \mathcal{O}$ of $\id_X$,  there is a compact normal subgroup $K \triangleleft \mathcal{O}$ with $K \subset U$  and such that $\mathcal{O}/K$ is a connected Lie group. 
\begin{center}
\textbf{Step 1:} If $K \triangleleft \mathcal{O}$ is a compact normal subgroup such that $\mathcal{O} / K $ is a connected Lie group, then $\mathcal{O} / K$ is nilpotent.
\end{center}
Denote by $\rho : \mathcal{O} \to \mathcal{O}/K$ the projection, and  let $V \subset \mathcal{O}/K$ be a small open neighborhood of the identity such that any subgroup of $\mathcal{O}/K$ contained in $V$ is trivial. Since $\rho ^{-1}(V) \subset \mathcal{O}$ is an open neighborhood of the identity, there is $\varepsilon > 0 $ such that 
\[ \{ g \in  \mathcal{O} \vert  d(gx,x) \leq \varepsilon \text{ for all }x \in X \}      \subset \rho^{-1}(V) .  \]
By Remark \ref{rem:commutator-passes-to-limit},  there is $N \in \mathbb{N}$, such that 
\[ \mathcal{O} ^{(N)}  \subset \{  g \in \mathcal{O}  \vert d(gx,x) \leq \varepsilon \text{ for all } x \in X  \} \subset \rho^{-1}(V) ,   \]
hence $ (\mathcal{O} / K ) ^{(N)} =  \rho \left(  \mathcal{O}^{(N)} \right) \subset V $ is trivial.
\begin{center}
\textbf{Step 2:} If $K \triangleleft \mathcal{O}$ is a  compact subgroup, then $[\mathcal{O} , K] $ is trivial.
\end{center}
Let $U \subset \mathcal{O}$ be a neighborhood of $\id_X$, and $K_1 \triangleleft \mathcal{O}$ a compact normal subgroup with $K_1 \subset U$ such that $\mathcal{O}/K_1$ is a connected Lie group. By Step 1, $\mathcal{O}/K_1$ is nilpotent, and by Lemma \ref{lem:compact-nilpotent}, $KK_1/K_1 \leq \mathcal{O}/K_1$ is central, hence $[\mathcal{O}, K] \leq K_1 \subset U$. Since $U$ was arbitrary, the commutator $[\mathcal{O}, K]$ is trivial.
\begin{center}
\textbf{Step 3:} $ \mathcal{O}$ is nilpotent.
\end{center}
Take $K \triangleleft \mathcal{O}$ a compact normal subgroup such that $\mathcal{O}/K$ is a connected Lie group. By Step 1, there is $N \in \mathbb{N}$ such that $\mathcal{O}^{(N)} \leq K$. By Step 2, $ \mathcal{O}^{(N+1)} = [ \mathcal{O}, \mathcal{O}^{(N)}] \leq [\mathcal{O}, K] = \{ \id_X \}.$
\begin{center}
\textbf{Step 4:} The map $\mathcal{O} \to X$ given by $g \mapsto gp$ is a homeomorphism.
\end{center}
By Theorem \ref{thm:berestovskii-open}, $\mathcal{O}$ acts transitively on $X$, hence $X \cong \mathcal{O} / K$, where $K : = \{ g \in \mathcal{O} \vert gp = p \}$ is the stabilizer of $p$. By Step 2, $K$ is central in $\mathcal{O}$, so by Lemma \ref{lem:normal-stabilizers}, it is trivial.

Combining Steps 3 and 4 the result follows.
\end{proof}

\section{Semi-locally-simply-connected nilpotent groups}\label{sec:slsc}

In this section, we prove the second part of Theorem \ref{thm:main-theorem}, consisting of the following result.

\begin{theorem}\label{thm:part-ii}
\rm Let $(X_n,p_n)$ be a sequence of almost homogeneous spaces, converging in the pointed Gromov--Hausdorff sense to a space $(X,p)$. If $X$ is semi-locally-simply-connected, then $X$ is a Lie group equipped with an invariant sub-Finsler metric.
\end{theorem}

Due to the following result of Valerii Berestovskii \cite[Theorem 3]{BerII}, all we need to show is that $X$ is a Lie group.

\begin{theorem}
\rm  (Berestovskii) Let $X$ be a proper geodesic space whose isometry group acts transitively. If $X$ is homeomorphic to a topological manifold, then its metric is given by a sub-Finsler structure.
\end{theorem}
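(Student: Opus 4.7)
The plan is to invoke Theorem \ref{MZHil} to promote the isometry group to a Lie group, distill a (sub-)Finsler norm at one tangent space from the given metric, propagate it $G$-equivariantly to all of $Y$, and verify that the resulting length metric recovers $d$. By Theorem \ref{MZHil}, $G := Iso(Y)$ is a Lie group and $Y \cong G/H$ for $H$ the compact closed stabilizer of a chosen basepoint $y_0 \in Y$. Writing $\mathfrak{g}, \mathfrak{h}$ for the corresponding Lie algebras and choosing an $Ad(H)$-invariant complement $\mathfrak{m}$ to $\mathfrak{h}$ (by averaging a scalar product over the compact $H$), one identifies $\mathfrak{m} \cong T_{y_0}Y$ via $v \mapsto \tfrac{d}{dt}\big|_{t=0}\exp(tv)y_0$.

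For $v \in \mathfrak{m}$, set $f_v(t) := d(y_0, \exp(tv)y_0)$. Sub-additivity $f_v(s+t) \leq f_v(s) + f_v(t)$ follows from the triangle inequality and $G$-invariance of $d$, so
\[
F(v) := \sup_{t > 0} \frac{f_v(t)}{t} \in [0, \infty]
\]
is a natural candidate for the (sub-)Finsler norm at $y_0$. Let $\mathfrak{v} := \{v \in \mathfrak{m} : F(v) < \infty\}$. I will show that $\mathfrak{v}$ is a linear subspace and $F|_{\mathfrak{v}}$ is an $Ad(H)$-invariant norm. Sub-additivity $F(v_1+v_2) \leq F(v_1) + F(v_2)$ uses the Lie--Trotter product formula $(\exp(\tfrac{t}{n}v_1)\exp(\tfrac{t}{n}v_2))^n \to \exp(t(v_1+v_2))$ in $G$, transferred to $Y$ by continuity of the action: the broken arc of $2n$ orbit segments has total $d$-length at most $n(f_{v_1}(t/n) + f_{v_2}(t/n)) \leq t(F(v_1)+F(v_2))$, so passing to the limit yields $f_{v_1+v_2}(t) \leq t(F(v_1)+F(v_2))$. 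Positive homogeneity is immediate by rescaling $t$, and positive-definiteness holds because $F(v)=0$ forces $f_v(t)=0$ for all $t>0$, hence $\exp(tv)\in H$ for all $t$, so $v \in \mathfrak{h} \cap \mathfrak{m} = \{0\}$. Invariance $F \circ Ad(h) = F$ is automatic since $h \in H$ fixes $y_0$ and acts by isometries.

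Transporting $(\mathfrak{v}, F)$ $G$-equivariantly yields a $G$-invariant continuous distribution $\mathcal{D} \subseteq TY$ with a continuous norm on its fibers, giving a Finsler structure when $\mathfrak{v} = \mathfrak{m}$ and a sub-Finsler structure otherwise; let $d_F$ denote the associated length metric. The inequality $d \leq d_F$ follows from the construction: a single orbit arc $t \in [0, \eta] \mapsto \exp(tv)y_0$ with $v \in \mathfrak{v}$ has $d$-length at most $\eta F(v)$ (by $f_v(\eta) \leq \eta F(v)$), and general horizontal curves are limits of concatenations of such arcs. The reverse inequality $d_F \leq d$ is obtained by partitioning a near-minimizing $d$-geodesic $y_0 \to y_1$ into short pieces $[p_j, p_{j+1}]$, using transitivity to write $p_{j+1} = g_j p_j$ with $g_j \in G$ close to the identity, expressing $g_j = \exp(w_j)$ in the exponential chart at $e \in G$, and building the $j$th horizontal segment from the $\mathfrak{m}$-component $w_j^{\mathfrak{m}}$ of $w_j$; concatenating these segments yields a horizontal curve of $F$-length within $O(\varepsilon)$ of $d(y_0, y_1)$.

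The principal obstacle is proving this last step rigorously: one must verify that the $\mathfrak{m}$-component of a short $d$-displacement actually lies in $\mathfrak{v}$ with $F$-value comparable to the displacement. Equivalently, $d$-geodesics cannot acquire nontrivial components in $F = \infty$ directions. This is expected because any fixed $v \in \mathfrak{m} \setminus \mathfrak{v}$ has $f_v(t)/t$ unbounded, so an $F = \infty$ component in a small step would accumulate divergent $d$-length against the bounded manifold displacement supplied by the smooth action $G \times Y \to Y$; local Lipschitz control of the action near $y_0$ together with the equivariance of $F$ transfer the bound from $y_0$ to every point of the partition, giving $d_F \leq d + O(\varepsilon)$ for arbitrary $\varepsilon > 0$ and hence $d_F = d$.
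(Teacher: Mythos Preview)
The paper does not prove this theorem; it is quoted from Berestovskii \cite{BerII} and used as a black box. So there is no ``paper's own proof'' to compare against, and I assess your argument on its own merits.

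Your construction of $F(v) = \sup_{t>0} f_v(t)/t$ and the verification that $F|_{\mathfrak{v}}$ is an $Ad(H)$-invariant norm are sound (subadditivity via Lie--Trotter is correct, and positive-definiteness uses $\mathfrak h \cap \mathfrak m = 0$ as you say). The inequality $d \le d_F$ also goes through. The genuine gap is exactly the one you flag: the reverse inequality $d_F \le d$. Your proposed resolution, however, does not work. You argue that a direction $v \notin \mathfrak{v}$ in a partition step would ``accumulate divergent $d$-length against the bounded manifold displacement,'' but this reverses the inequality. For $v \notin \mathfrak{v}$ one has $f_v(t)/t \to \infty$ as $t\to 0$, which means $f_v(t)$ decays \emph{slower} than linearly, not that $f_v(t)$ is large. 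Concretely, in the Heisenberg group with its standard sub-Riemannian metric the central direction $Z$ has $f_Z(t)\asymp t^{1/2}$, so a step $\exp(sZ)$ has small $d$-displacement $\asymp s^{1/2}$ even though $F(Z)=\infty$. Thus your partition of a $d$-geodesic can produce increments $w_j^{\mathfrak m}\notin\mathfrak v$ with arbitrarily small $d(p_j,p_{j+1})$ and no contradiction arises; your construction then has no horizontal segment to assign to that piece, and the estimate collapses.

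What is actually needed is a proof that every $d$-geodesic (or more generally every $d$-rectifiable curve) is differentiable a.e.\ in the manifold sense with derivative lying in $\mathcal D$ and $F(\dot\gamma)\le 1$; equivalently, that the metric differential of $\gamma$ takes values in the $F$-unit ball of $\mathfrak v$. This is the substantive analytic content of Berestovskii's theorem. It also yields, via Chow--Rashevskii, that $\mathfrak v$ bracket-generates $\mathfrak m$, a point your sketch never addresses and without which $d_F$ could be $+\infty$ between nearby points. Berestovskii obtains this by analyzing small $d$-balls in exponential coordinates and identifying the metric tangent cone at $y_0$; the partition-and-project heuristic by itself cannot close the gap.
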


For the proof of Theorem \ref{thm:part-ii}, we require the following elementary observation.

\begin{lemma}\label{lem:short-loop-nt}
\rm Let $X$ be a  proper semi-locally-simply-connected geodesic space. Assume the inclusion $B(x,r) \to X$ has nontrivial content for some $x \in X$, $r > 0$. Then there is a non-contractible loop  in $X$ based at $x$ of length $\leq 3r $. 
\end{lemma}

\begin{proof}
By hypothesis, there is a loop $\beta : [0,1] \to B^X(x,r) $ based at $x$ that is non-contractible in $X$. Using the semi-local-simple-connectedness, we can find a Lipschitz loop $\gamma : [0,1] \to B^X(x,r)$ homotopic to $\beta $ by approximating it with piece-wise geodesics. Let $m \in \mathbb{N}$ be such that $\leng ( \gamma  \vert_{\left[ \frac{k-1}{m}, \frac{k}{m} \right]} ) \leq r $ for all  $k \in \{ 1, \ldots , m \}$. 

For each $k \in \{ 0, \ldots , m  \}$, let $\sigma _k : [0,1] \to B^X(x,r)$ be a minimizing path from $x$ to $\gamma (\frac{k}{m})$. Since $\gamma $ is homotopic to the concatenation of the curves $\sigma_{k-1} \ast \gamma  \vert_{\left[ \frac{k-1}{m}, \frac{k}{m} \right]} \ast \overline{\sigma}_k $ with $k \in \{ 1, \ldots , m \}$, then at least one of them is non-contractible in $X$.
\end{proof}

\begin{proof}[Proof of Theorem \ref{thm:part-ii}:]
By Theorem \ref{thm:part-i}, we can assume $X$ is a connected nilpotent group with $e_X = p$. By Corollary \ref{cor:glushkov}, if $X$ is not a Lie group, then it contains a sequence of non-trivial compact subgroups $K_1 \geq K_2 \geq \ldots  $ with 
\[   \bigcap_{j=1}^{\infty} K_j = \{ p \},    \]
such that $H_j : = X/K_j$ is a connected nilpotent Lie group, and the identity connected component of $K_j/K_{j-1}$ is non-trivial for infinitely many $j$.

Since $X$ is semi-locally-simply-connected, there is $\delta > 0$ such that the inclusion $B^X(p , \delta ) \to X $ has no content. By our assumption, there is $j \in \mathbb{N}$ with $ K_{j-1} \subset B^X(p, \delta / 3) $, and the identity connected component of $K_{j-1}/K_{j} $ is non-trivial. By Lemmas \ref{lem:compact-content} and \ref{lem:short-loop-nt}, there is a non-contractible loop $\gamma : [0,1] \to  H_{j}$ based at $e$, with $\leng (\gamma )\leq \delta $.  By Lemma \ref{lem:horizontal-lift}, there is a lift $\tilde{\gamma}_1 : [0,1] \to X$ with $\tilde{\gamma}_1 (0) = p$, $\rho \circ \tilde{\gamma}_1 = \gamma $, and $\leng (\tilde{\gamma}_1) \leq \delta $, where $\rho : X \to H_j$ is the natural projection.  For each $m \in \mathbb{N}$, define the curve $\tilde{\gamma}_m : [0,1]\to X$ as 
\[\tilde{\gamma}_m(t) : = \left[ \tilde{\gamma}_1(1) \right]^{m-1} \tilde{\gamma}_1(t). \] 
Observe that $\tilde{\gamma}_{m}(1) = \tilde{\gamma}_{m+1}(0) \in K_j$ for each $m$, so we can define the curves $\beta_m : = \tilde{\gamma}_1 \ast \ldots \ast \tilde{ \gamma }_m$, and their images lie all in $B^X(p, \delta)$.

Since $H_{j}$ is a Lie group, there is $  \varepsilon  > 0 $ such that if two closed curves in  $H_{j}$ are at uniform distance  less than $\varepsilon$, then they are homotopic to each other. Let $m_0$ be a positive integer such that $\beta_{m_0} (1) \in B^X(p, \varepsilon )$. It exists as $B^X(p,\delta ) $ is pre-compact, and $\beta_{n-m} (1) = \beta_n(1)\beta_m(1)^{-1}$ for all $n,  m  \in \mathbb{N}$ with $n \geq m$.

Let $\beta : [0,1 ] \to B^X(p,\varepsilon )$ a minimizing curve from $\beta_{m_0}(1)$ to $p$. As $\pi_1(H_j)$ has no torsion and $\rho ( \beta_{m_0} \ast \beta )$ is $\varepsilon$-uniformly close to a reparametrization of $\gamma \ast \cdots \ast \gamma$ ($m_0$ times), it is non-contractible in $H_j$. However, it factors through $B^X(p, \delta )$, meaning that the composition $B^X(p, \delta ) \to X \to H_j$ has non-trivial content, a contradiction.
\end{proof}

\section{Almost translational behavior}\label{sec:trans}

As stated in the Summary, in the remaining sections, we prove the following result, finishing the proof of Theorem \ref{thm:main-theorem}.

\begin{theorem}\label{thm:part-iii}
    \rm   Under the conditions of Theorem \ref{thm:part-ii}, for large enough $n$ there are quotients of $\pi_1(X_n)$ containing isomorphic copies of $\pi_1(X)$.
\end{theorem}

By contradiction, after passing to a subsequence, we can assume for no $n$, the group $\pi_1(X_n)$ admits a quotient containing an isomorphic copy of $\pi_1(X)$. Let $G_n^{\prime}$ be the groups given by Lemma \ref{lem:almost-nilpotent}. After passing further to a subsequence, we can assume the groups $G_n^{\prime }$ converge equivariantly to a closed group $\Gamma \leq \iso (X) $, which  by Theorems \ref{thm:part-ii} and \ref{thm:mz-hilbert} is a Lie group. The main result of this section is the following.

\begin{proposition}\label{prop:connected}
\rm $\Gamma$ acts freely on $X$.
\end{proposition}

\begin{proof} Since $\Gamma$ is a Lie group, there is a neighborhood $U \subset \Gamma $ of $\id _X $ that contains no non-trivial subgroups and there is $\varepsilon > 0 $ such that
\[ \{ g \in \Gamma \vert d(gx,x) \leq \varepsilon \text{ for all } x \in X  \} \subset U    .      \]
 By Remark \ref{rem:commutator-passes-to-limit}, there is $N \in \mathbb{N}$ such that $\Gamma ^{(N)} \subset U$, so $\Gamma $ is nilpotent of step $\leq N$. Let $\mathcal{O} \leq \Gamma $ be the identity connected component, and $K \leq \Gamma$ a compact subgroup.
 \begin{center}
 \textbf{Claim:} The commutator $[\mathcal{O}, K] $ is trivial.
 \end{center}
For the proof of this claim, we use the commutator estimates of 
 \cite[Section 3.3]{BK}. Let $\mathfrak{g}$ be the Lie algebra of $\Gamma$. By Weyl's unitary trick, we can equip $\mathfrak{g}$ with an inner product $\langle \cdot  , \cdot \rangle $ for which the adjoint action $\Adjoint : K \to \genlin (\mathfrak{g}) $ consists of orthogonal transformations. If the claim fails, there is $h \in K$ for which $\Adjoint _h : \mathfrak{g} \to \mathfrak{g} $ is not the identity. Then there is an  $\langle \cdot  , \cdot \rangle $-orthonormal basis 
\[ \{ a_1, b_1, \ldots , a_{k_1} , b_{k_1}, c_1, \ldots , c_{k_2}, d_1 , \ldots , d_{k_3} \} \hookrightarrow \mathfrak{g}  \] 
and angles 
\[ \theta_1, \ldots , \theta_{k_1} \in \mathbb{S}^1 \backslash \{ 1 \} \]
 such that $k_1 + k_2 > 0$ and
\begin{align*}
  \Adjoint_h (a_j) &= \cos \theta_j a_j + \sin \theta_j b_j,   \\
 \Adjoint_h (b_j) &=   -\sin \theta_j a_j + \cos \theta_j b_j, \\
  \Adjoint_h (c_j)& = -c_j ,\\
   \Adjoint_h (d_j ) & =   d_j. 
\end{align*} 
We deal first with the case $k_1 > 0 $. By the Baker--Cambell--Hausdorff formula, for every $\varepsilon > 0 $ there is $\delta > 0$ such that
\begin{equation}\label{eq:bch-estimate}
 d(  \log ( \exp (\delta a_1)   \exp (x) ) , x + \delta a_1 ) \leq \varepsilon \delta      
\end{equation}  
and 
\begin{equation}\label{eq:taylor}
d( \log (  h \exp (x) h^{-1} ) ,  \Adjoint _ h (x)  )  \leq \varepsilon \delta 
\end{equation}
for all $x \in B^{\mathfrak{g}}_d(0, 100 N 2^N \delta )$, where $d $ is the metric induced from $\langle \cdot , \cdot \rangle$. Iterating the estimates (\ref{eq:bch-estimate}) and (\ref{eq:taylor}), one can find  $C ( N ) >0$ such that 
\begin{equation}\label{eq:commutator-estimates}
d( \log ( \underbrace{[ h , [ h , \ldots [ h, \exp (\delta a_1)]\ldots ]]}_{\text{step }N \text{ commutator}}) , ( \id_{\mathfrak{g}} - \Adjoint _{h^{-1}}  )^N (\delta a_ 1 ) ) \leq C \varepsilon \delta .              
\end{equation}   
Since $\Gamma ^{(N)} = \{ \id _X \} $, the step $N$ commutator $[ h , [ h , \ldots [ h, \exp (\delta a_1)]\ldots ]] $ is trivial. On the other hand, a direct computation shows 
\[ \vert( \id_{\mathfrak{g}} - \Adjoint _{h^{-1}}  )^N (\delta a_ 1 ) \vert = \vert 1  - \theta_1  \vert ^N \delta , \]
 contradicting (\ref{eq:commutator-estimates}) if $\varepsilon $ is small enough (depending on $C,$ $N,$ and $\vert 1 - \theta_1  \vert $).  The case $k_1 = 0$ is similar, but using $c_1$ instead of $a_1$.

Let $K : = \{ g \in \Gamma \vert gp = p \} $ be the stabilizer of $p$ and let $x \in X$. By Theorem \ref{thm:berestovskii-open},  $\mathcal{O} $ acts transitively on $X$ so there is $g \in \mathcal{O}$ with $gp = x$. By the claim above, $gK g^{-1} = K$, hence $hx=x$ for all $h \in K$. Since $x$ was arbitrary, $K $ is trivial and $\Gamma = \Gamma / K  \cong X $.
\end{proof}

\section{Getting rid of small subgroups}\label{sec:nss}

In this section we identify and get rid of the small subgroups of $G_n^{\prime}$, using the \textit{escape norm} and the Gleason lemmas from \cite{BGT}. Let $\varphi_n : G_n^{\prime} \to \Gamma$ be the Gromov--Hausdorff approximations given by Definition \ref{def:equivariant}.  Since $\Gamma $ is a Lie group, there is $\varepsilon_0 > 0 $ such that $B^{\Gamma } (\id_X , 10^3 \varepsilon_0)$ contains no non-trivial subgroups and the inclusion $B^{\Gamma } (\id_X , 10^3 \varepsilon_0) \to \Gamma $ has no content. 

Let $B $ be a small open convex symmetric set in the Lie algebra  $\mathfrak{g} $ of $\Gamma $ such that $  \exp(B )\subset B( \id _X , \varepsilon_0) $. Notice there is $C \in \mathbb{N}$ depending only on the dimension of $\Gamma$ such that if $B$ is small enough, then one can cover $\exp (3 B)$ by $C$ translates of $\exp (B) $.
 With this $B\subset \mathfrak{g}$ and $C \in \mathbb{N}$,  define the sets 
\begin{align*}
\Theta_n & : =  B^{G_n^{\prime}} ( \id_{X_n} , 10^2 \varepsilon_0   )      \\ 
\hat{ T} _n  & : =  \{ g \in \Theta_n  \vert \varphi_n(g) \in \exp (B)   \} ,\\
 \hat{ \Sigma }_n  & :=   \{ g \in \Theta_n  \vert \varphi_n (g) \in \exp \left( B / 10^5 C^3 \right)   \} , \\
  T_n  &  := \hat{T}_n \cup \hat{T}_n^{-1} ,\\
     \Sigma _n & : =  \hat{ \Sigma }_n \cup \hat{ \Sigma }_n^{-1}. 
\end{align*}
If $B$ was chosen small enough, by the Baker--Campbell--Hausdorff formula and (\ref{eq:equivariant-composition}) one has
\[ \{ asa^{-1} \vert a \in T_n^4, s \in  \Sigma _n \} \subset  \Sigma _n^2 \]
for $n$ sufficiently large, and all three conditions of a strong global approximate group hold (see \cite[Proposition 7.3]{BGT} for further details).

\begin{lemma}\label{Strong}
\rm For $n$ sufficiently large, the set $T_n$ (thanks to the set $ \Sigma _n$) is a strong $C$-approximate group.
\end{lemma}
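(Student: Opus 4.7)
The lemma packages two assertions: that $A_i$ is a $C$-approximate group in the sense of Definition \ref{approxgr}, and that together with $S_i$ it satisfies the three bullet points of a strong $C$-approximate group. My plan is to prove both in parallel, using (a) Corollary \ref{holonomy2}, which tells us the holonomy map $t$ is an approximate homomorphism as $i\to\alpha$, (b) the Baker--Campbell--Hausdorff formula in the Lie group $X\cong \Gamma_{\alpha}$, and (c) Lemma \ref{CCDoubl}, which provides the covering number $C$.

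First I would establish the covering property $A_i^2 \subset \bigcup_{j=1}^C \gamma_j A_i$. By Corollary \ref{holonomy2}, for any $g,h$ in a stable family, $t(gh)$ is arbitrarily close in $X$ to $t(g)\cdot t(h)$ once $i$ is close to $\alpha$; combined with BCH applied to the small set $\exp(B)$, this places $t(A_i^2)$ inside $B^{X}(p,4r_0)$, where $r_0$ is chosen so that $B^{X}(p,r_0)\subset \exp(B)$. Apply Lemma \ref{CCDoubl} to cover $B^{X}(p,4r_0)$ by at most $C$ balls $B^{X}(x_j,r_0)$. For each $j$ choose $\gamma_j\in\Theta_i$ with $t(\gamma_j)$ within, say, $r_0/10$ of $x_j$ (possible since $\Gamma_{\alpha}$ acts transitively on $X$ and $t$ approximates this action). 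Any $g\in A_i^2$ with $t(g)\in B^{X}(x_j,r_0)$ then satisfies, using Proposition \ref{Holohomo} and left-invariance on $X$, $t(\gamma_j^{-1}g)\in B^{X}(p, r_0+\text{error})\subset \exp(B)$, so $\gamma_j^{-1}g\in \hat A_i\subset A_i$. This gives the $C$-approximate-group property.

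Next I would verify the three strong conditions, each a direct BCH estimate performed on the $X$-side via $t$. For (i), elements $a\in A_i^4$, $s\in S_i$ have $t(a)$ of bounded size in $X$ while $t(s)\in \exp(B/10^5C^3)$ is tiny; BCH gives $t(asa^{-1})\approx t(s)+O(|t(s)||t(a)|)$, so the conjugate lies in $\exp(cB/10^5C^3)$ for a constant $c$ depending only on the nilpotency class of $X$. Taking a product of $10^3C^3$ such conjugates, BCH yields a holonomy in $\exp\bigl((10^3C^3\cdot c/10^5C^3+\text{error})B\bigr)\subset \exp(B)$ provided $B$ was chosen small enough and $i$ is close enough to $\alpha$; hence the product lies in $A_i$. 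For (ii), if $g,g^2,\ldots,g^{1000}$ all lie in $A_i^{100}$, then BCH applied to $t(g^k)\approx k\,t(g)$ together with the bound $t(A_i^{100})\subset\exp(C_1B)$ forces $t(g)\in\exp(B)$, so $g\in A_i$. For (iii), the same linearization $t(g^k)\approx k\,t(g)$ applied up to $k=10^6C^3$ forces $t(g)\in \exp(B/10^5C^3)$ (the factor of $10$ slack absorbs all cumulative BCH and holonomy errors), hence $g\in S_i$.

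The main obstacle is purely the bookkeeping of error terms: each application of Corollary \ref{holonomy2} contributes an error that is $o(1)$ as $i\to\alpha$, and each application of BCH contributes a superlinear error in the size of $B$. The tolerances $B/10^5C^3$ and the exponents $10^3C^3,10^6C^3$ appearing in the definitions were chosen precisely so that, for $B$ small enough and $i$ close enough to $\alpha$, all cumulative errors over products of length at most $10^6C^3$ remain below the slack. Since both the word lengths involved and the nilpotency class of $X$ are fixed once $C$ is fixed, shrinking $B$ and passing to $i$ sufficiently close to $\alpha$ closes the argument; the delicate point is that the choice of $B$ must be made \emph{before} fixing the thresholds in $\hat A_i$ and $\hat S_i$, but since those thresholds are defined in terms of $B$ itself, the scaling is compatible.
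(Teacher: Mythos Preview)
Your approach matches the paper's, which dispatches the lemma in one sentence just before its statement by citing the Baker--Campbell--Hausdorff formula together with Proposition~\ref{holonomy1} and Corollary~\ref{holonomy2}; your proposal is a faithful expansion of that line. The one loose thread---that your covering argument implicitly needs $\exp(B)$ comparable to a metric ball in $X$ so that $t(A_i^2)\subset B^X(p,4r_0)$ actually follows---is absorbed by the same ``$B$ chosen small enough'' freedom both you and the paper invoke.
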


Lemma \ref{Strong} and Theorem \ref{thm:gleason}(\ref{thm:gleason-product}-\ref{thm:gleason-conjugate}) imply that for $n$ sufficiently large, the set
\[ W_n  := \{ g \in \Theta_n \vert   \Vert g \Vert _{T_n}=0 \} \]
is a subgroup of $G_n^{\prime}$ normalized by $T_n$.
\begin{remark}\label{rem:wn-small}
\rm From (\ref{eq:equivariant-composition}) and the fact that $B^{\Gamma } (\id _X , 10^3 \varepsilon_0 )$ has no non-trivial subgroups it is not hard to prove that for any choice of $w_n \in W_n$ one has $\varphi_n (w_n) \to \id_X$.
\end{remark}

\begin{proposition}\label{prop:gh-close}
\rm The quotient maps $X_n \to X_n/W_n$ are global $\varepsilon_n$-approximations with $\varepsilon_n \to 0$ as $n \to \infty$.
\end{proposition}
\begin{proof}
By Proposition \ref{prop:connected}, there is $\delta_0 > 0$ such that $\{ g \in \Gamma \vert d(gp,p) \leq \delta_0 \} \subset \exp (B / 2)$. Hence $ \{ g \in G_n^{\prime} \vert d(gp_n,p_n) \leq \delta_0 \} \subset T_n$ for $n$ large enough, and by Lemma \ref{lem:short-generators}, $T_n$ generates $G_n^{\prime} $ for $n$ large enough. Then $ W_n $ is a normal subgroup of $G_n^{\prime}$, so by Lemma \ref{lem:normal-stabilizers} and Remark \ref{rem:wn-small} one has
\[    \lim_{ n \to \infty }  \sup _{h \in W_n} \sup_{x \in X_n} d(hx,x)  = 0   \]
and the result follows.
\end{proof}

Set $\Gamma_n  := G_n^{\prime}/W_n $,   $\rho : G_n^{\prime} \to \Gamma_n$ the quotient map, $A_n := \rho (T_n)$, and let $\textbf{A} $ be the  ultraproduct $\lim_{n \to \alpha}  A_n$.

\begin{proposition}\label{prop:a-nss}
\rm $\textbf{A}$ is an NSS ultra approximate group.
\end{proposition}
\begin{proof}
Since $T_n$ are $C$-approximate groups, then so are the sets $A_n$.  For $[g] \in A_n \backslash \{  e \},$ we have $\Vert g  \Vert_{T_n}\neq 0$, so $g^m $ is not in $T_n^2 \supset T_n W_n $ for some $m$. Hence $   [g]^m  = \rho (g^m)$ does not belong to $A_n$.  This shows that $A_n$ does not contain non-trivial subgroups for $n$ large enough.
\end{proof} 
 We still have the map
\begin{center}
$ \overline{ \varphi }_n: \Gamma_n  \to \Gamma $
\end{center}  given by 
\begin{center}
$ \overline{ \varphi  }_n \left( [g]\right) := \varphi_n (g)$.
\end{center}
Of course, to make this map well defined, we have to choose one representative from each class in $ \Gamma_n $. However, different choices of representatives only change the value of $\overline{\varphi}_n$  by an error which goes to $0$ as $n \to \infty$. More precisely, if one considers two sequences $g_n, g_n^{\prime} \in \Theta_n $ with $g_nW_n = g_n^{\prime} W_n $, then there is a sequence $w_n \in W_n$ with $ g_n = g_n^{\prime} w_n  $ for all $n$ and by Remarks \ref{rem:equivariant-composition} and \ref{rem:wn-small} one has 
\[ \lim _{n \to \infty } d( \varphi_n (g_n) , \varphi_n (g_n^{\prime}) ) = 0.  \]
Consequently, for all $a_n, a_n^{\prime} \in A^8_n$ we have
\begin{equation}\label{eq:model-morphism}
 \lim _{n \to \infty } d( \overline{\varphi}_n (a_na_n^{\prime}) , \overline{\varphi}_n (a_n)\overline{\varphi}_n(a_n^{\prime}) ) = 0. 
\end{equation}   
 Consider the map  $\sigma : \textbf{A}^8 \to \Gamma $
given by the metric ultralimit
\[ \sigma ( \{  g_n  \}_n ) := \lim\limits_{n \to \alpha} \overline{\varphi}_n (g_n). \] 
From (\ref{eq:model-morphism}) and the fact that the maps $\varphi_n$ are Gromov--Hausdorff approximations, one has the following.
\begin{proposition}\label{prop:good-model}
\rm The pair $(\Gamma , \sigma ) $ is a good model for $\textbf{A}$.
\end{proposition}
Let $\tilde{ \Gamma } $ be the universal cover of $\Gamma $. By our choice of $\varepsilon _ 0$, the balls $B^{\tilde{\Gamma}}(e, 10^3 \varepsilon _ 0 ) $ and $B^{\Gamma}(\id_X , 10^3 \varepsilon_0 )$ are naturally identified, so for large enough $n$, we have maps $\tilde{\varphi }_n : \rho ( \Theta _n ) \to \tilde{\Gamma }$ with  $\tilde{\varphi}_n ( g ) \in B^{\tilde{\Gamma }}(e, 10^3 \varepsilon_0 )  $ and $\Phi ( \tilde{\varphi } _n (g)  ) = \overline{\varphi}_n (g)$ for all $g \in \rho ( \Theta_n )$, where $\Phi : \tilde{\Gamma } \to \Gamma$ is the natural projection.

\section{The nilprogressions}\label{sec:nilprog}

In this section, we apply a short basis procedure by Breuillard, Green, and Tao to find a large ultra nilprogression in $\textbf{A}$ (cf. \cite[Theorem 9.3]{BGT}).

\begin{theorem}\label{thm:nilprogression-construction}
\rm Let $\textbf{A}=\lim_{n \to \alpha} A_n$ be an NSS ultra approximate group. Assume there is a good Lie model $\sigma: \textbf{A}^8 \to \Gamma $. Then $\textbf{A}^4$ contains a nondegenerate ultra nilprogression $\textbf{P}$ of rank $r := \dimensi (\Gamma )$ in $C$-normal form for some $C>0$, with the property that for all standard $\varepsilon \in (0,1)$, there is an open set $U_{\varepsilon} \subset \Gamma $ with $\sigma^{-1}(U_{\varepsilon}) \subset G(\varepsilon \textbf{P})$.
\end{theorem}

\begin{proof}
The proof is done by induction on $r$. If $r=0$, then $\Gamma $ is a trivial group and the set $U_0 $ from Definition \ref{def:model} equals $ \Gamma $. Property \ref{def:model-u0} implies that $ \textbf{A}^8 = \sigma^{-1} (\Gamma )=\sigma^{-1}(U_0)  \subset \textbf{A}  $. Hence for $\alpha$-large enough $n$, $A_n$ is a group, which is trivial by the NSS property and there is nothing to show. For the induction step with $r \geq 1$, we follow step by step the construction of \cite[Section 9]{BGT}.

Let $B $ be a small open convex symmetric set in $\mathfrak{g}$, the Lie algebra of $\Gamma$.  Let  $\textbf{A}^{\prime \prime \prime} \subset \textbf{A}^{\prime \prime} \subset \textbf{A}^{\prime } \subset \textbf{A}$ be sub ultra approximate groups of $\textbf{A}$ such that 
\begin{center}
$   \sigma ^{-1} (\exp (B )) \subset \textbf{A}^{\prime} \subset \sigma^{-1} (  \exp ((1.001) B)    ),     $

$   \sigma ^{-1} (\exp (\delta B)) \subset \textbf{A}^{\prime \prime } \subset \sigma^{-1} (  \exp ((1.001) \delta B)    )    , $

$   \sigma ^{-1} (\exp ( \delta  B /10)) \subset \textbf{A}^{\prime \prime \prime } \subset \sigma^{-1} (  \exp ((1.001) \delta B/10)    ),     $
\end{center}
where $\delta \in (0,1)$ will be chosen later (their existence is guaranteed by \ref{def:model-approximation}).   Notice that if $B$ was chosen small enough, then $\textbf{A}^{\prime} $, $\textbf{A}^{\prime \prime}$, $\textbf{A}^{\prime \prime\prime} $ are strong ultra approximate groups.

Let $u \in \textbf{A}^{\prime } \backslash \{  e \}$ be such that minimizes $ \Vert u \Vert _{\textbf{A}^{\prime}}$ (in this setting, $\Vert \cdot \Vert_{\textbf{A}^{\prime}}$ is a nonstandard real number). Then, by Theorem \ref{thm:gleason}(\ref{thm:gleason-commutator}), if $\delta$ was chosen small enough,  for all $x \in (\textbf{A}^{\prime \prime})^{10}$ we have 
 \begin{center}
 $ \Vert   [u, x] \Vert _{\textbf{A}^{\prime}} = O\left( \Vert u \Vert _{\textbf{A}^{\prime}} \Vert x \Vert _{\textbf{A}^{\prime}} \right)  < \Vert u \Vert _{\textbf{A}^{\prime}}  $.
 \end{center}
Since $\Vert u \Vert_{\textbf{A}^{\prime}}$ was minimal, $u $  commutes with every element in $(\textbf{A}^{\prime \prime })^{10}$. Consequently, if we define 
\[ \textbf{Z} := \{   u^k   \vert \, k \in {}^{\ast}\mathbb{N}, \,  \vert k \vert \leq 1/ \Vert u \Vert _{\textbf{A}^{\prime}}  \} ,\]
then every element of $\textbf{Z}$ will commute with every element of $(\textbf{A}^{\prime \prime })^{10}$. Since $\left( \textbf{A}^{\prime \prime} \right)^6$ is well defined, by Lemma \ref{lem:local-quotient} we can form the quotients $\textbf{A}^{\prime \prime} /\textbf{Z} $ and $\textbf{A}^{\prime \prime \prime } /\textbf{Z} $. 

\begin{lemma}
\rm The image $\sigma (\textbf{Z} )$ is of the form $\phi ( [-1,1] )$, with $\phi (t) := \exp (tv)$,  for some non-zero $v$ in  the center of $\mathfrak{g}$. If $\delta$ is small enough,  after taking a small open neighborhood of the identity $U\subset \Gamma $ and the quotient $U/\sigma (\textbf{Z}) $, one can guarantee that
\begin{enumerate}[label=\roman*]
\item $U/\sigma (\textbf{Z})$ is a connected local Lie group of dimension $r-1$.
\item $\textbf{A}^{\prime\prime} / \textbf{Z}$ and $\textbf{A}^{\prime\prime\prime}/\textbf{Z}$ are NSS ultra-approximate groups.
\item $\overline{\sigma} : \left(\textbf{A}^{\prime\prime }/\textbf{Z} \right)^8 \to U / \sigma (\textbf{Z})$ is a good model, where $\overline{\sigma }$ comes from the composition $(\textbf{A}^{\prime\prime})^8 \to U \to U / \sigma (\textbf{Z})$.
\end{enumerate}
\end{lemma}

\begin{proof}
This is the content of \cite[Lemma 9.4 \small{(}\normalsize{i-ii}\small{)}]{BGT} and its proof.
\end{proof}
 We can then apply the induction hypothesis to $\textbf{A}^{\prime \prime \prime}/\textbf{Z}$ and deduce there is a non-degenerate ultra nilprogression $\overline{\textbf{P}}: =  \overline{P}(\overline{u}_1, \ldots , \overline{ u}_{r-1} ; \overline{N_1}, \ldots , $ $\overline{N}_{r-1}  )$  in $\overline{C}$-normal form with $\overline{C}> 0 $,  $u_j \in \textbf{A}^{\prime\prime } / \textbf{Z}$, $\overline{N}_j \in {}^{\ast}\mathbb{R}$ for $j \in \{ 1, \ldots , r-1 \}$ such that $\overline{\textbf{P}} \subset (\textbf{A}^{\prime \prime \prime}/ \textbf{Z})^4 \subset \textbf{A}^{\prime \prime}/\textbf{Z}$ and having the property that for all standard $\varepsilon > 0$, there is an open set $V_{\varepsilon} \subset U / \sigma (\textbf{Z})$ with $( \overline{\sigma})^{-1} (V_{\varepsilon}) \subset G(\varepsilon \overline{\textbf{P}})$.
 
 For $\theta > 0$, construct $\textbf{P} : =P( u_1, \ldots , u_{r};$ $ N_1, \ldots , N_{r})$, where $u_j \in \textbf{A}^{\prime \prime}$ is a lift of $\overline{u}_j$ that minimizes $\Vert u_j \Vert _{\textbf{A}^{\prime \prime }}$ for $j \in \{ 1, \ldots , r-1\}$,   $N_j : = \theta \overline{N}_j$ for $j \in \{1, \ldots , r-1\}$, $u_{r} : = u$, $N_{r }: = \theta / \Vert u \Vert _{\textbf{A}^{\prime \prime}}$. By \cite[Lemma 9.4 \small{(}\normalsize{iii}\small{)}]{BGT} and its proof, if $\theta $ is small enough, $\textbf{P}$ is a non-degenerate ultra nilprogression in $C$-normal form for some $C > 0$. 
 
  The only thing left to prove is that for all $\varepsilon > 0$, there is an open set $U_{\varepsilon} \subset \Gamma $ such that $\sigma ^{-1}  (U_{\varepsilon}) \subset G( \varepsilon \textbf{P})$. By contradiction, assume that for some $\varepsilon > 0$, there is  $x \in \textbf{A}^{\prime \prime }\backslash G( \varepsilon \textbf{P})$ with $\sigma (x) = e_{\Gamma }$. If that is the case, $ \overline{\sigma } (x \textbf{Z}) = e_{U/ \sigma (\textbf{Z})}$, and by our induction hypothesis, for all standard $\eta >0$ we have $\pi (x) \in G( \eta \overline{\textbf{P}})$, where $\pi : \textbf{A}^{\prime \prime } \to \textbf{A}^{\prime \prime } / \textbf{Z} $ is the natural projection. Therefore $x = u_1 ^{n_1} \ldots u_{r} ^{n_r}$, with
\begin{center}
 $\vert n_j \vert \leq \eta \overline{N_j} /\overline{C}$ for $j \in \{ 1, \ldots , r-1 \} $, $\vert n_r \vert \leq \Vert u_r \Vert _{\textbf{A}^{\prime}}$.
\end{center}
By \cite[Lemma 9.5]{BGT}, one has $\Vert u_j \Vert _{\textbf{A}^{\prime \prime }} = O (  \Vert \overline{u}_j \Vert _{\textbf{A}^{\prime \prime }/\textbf{Z}} )$ for $j \in \{ 1, \ldots , r-1 \}$. Combining this with the fact that $\overline{N}_j = O(1/\Vert   \overline{u}_j \Vert _{\textbf{A}^{\prime \prime }/\textbf{Z}})$ for each $j \in \{ 1, \ldots , r-1 \}$ and Theorem \ref{thm:gleason}(\ref{thm:gleason-product}), we get
\begin{align*}
\Vert u_1^{n_1} \ldots u_{r-1}^{n_{r-1}} \Vert _{\textbf{A}^{\prime \prime}} &=  O \left(  \sum_{j=1}^{r-1} \Vert u_j ^{n_j} \Vert_{\textbf{A}^{\prime \prime}}  \right)\\
& =   O\left( \sum_{j=1}^{r-1} \vert n_j \vert \Vert u_j \Vert_{\textbf{A}^{\prime \prime}} \right) \\
& =  O \left( \eta \sum_{j=1}^{r-1}  \overline{N_j} \Vert \overline{u}_j \Vert_{\textbf{A}^{\prime \prime}/\textbf{Z}} \right) \\
& =  O (\eta).
\end{align*}
Since $\eta$ was arbitrary, we obtain that $\Vert u_1^{n_1} \ldots u_{r-1}^{n_{r-1}} \Vert _{\textbf{A}^{\prime \prime}}$ is infinitesimal. Then again by Theorem \ref{thm:gleason}(\ref{thm:gleason-product}),
\begin{center}
$\Vert u_{r} ^{n_{r}} \Vert _{ \textbf{A}^{\prime \prime} } = O(  \Vert x \Vert_{\textbf{A}^{\prime \prime}} + \Vert u_1^{n_1} \ldots u_{r-1}^{n_{r-1}} \Vert _{\textbf{A}^{\prime \prime}}   )   $.
\end{center}
This implies that $   \Vert u_{r } ^{n_{r}} \Vert _{ \textbf{A}^{\prime \prime} }   $ is infinitesimal so $\vert n_{r} \vert = o (N_{r} ) \leq \varepsilon N_{r} / C $. Also, since $\eta$ was arbitrary, $\vert n_j \vert \leq \varepsilon N_j /C$ for $j\in \{1, \ldots , r-1 \}$. Therefore $x \in G(\varepsilon \textbf{P})$, which is a contradiction.
\end{proof}

\begin{remark}\label{strong-basis}
\rm From the proof of Theorem \ref{thm:nilprogression-construction}, the group $\Gamma $ is nilpotent and the basis $\{  v_1, \ldots , v_r \}   $  of $ \mathfrak{ g}$ given by
\begin{equation}\label{eq:malcev-basis}
\exp ( t v_j ) = \sigma \left( u_j ^{\left\lfloor  t  N_j /C \right\rfloor  }   \right)  \text{ for }t \in [0,1]  
\end{equation}
is a strong Malcev basis (see also \cite[Proposition 9.6]{BGT}). 
\end{remark}
\section{Malcev theory}\label{sec:malcev}

By Propositions \ref{prop:a-nss} and \ref{prop:good-model}, Theorem \ref{thm:nilprogression-construction} applies to $\textbf{A}$. Let $r : = \dim (\Gamma)$, $\textbf{P} = P (u_1, \ldots , u_r ;$ $ N_1, \ldots , N_r)$, and $C>0$ be given by Theorem \ref{thm:nilprogression-construction}, and let $\{ v_1, \ldots , v_r \} \subset \mathfrak{g}$ be the Malcev basis given by (\ref{eq:malcev-basis}). With these parameters, let $\varepsilon > 0 $ be given by Theorem \ref{thm:malcev-embedding}. By \cite[Remark C.4]{BGT}, there is $\delta > 0 $ such that $G( \delta \textbf{P} )^2 \subset G( \varepsilon \textbf{P} )$.    We fix these choices of $C $,  $\varepsilon$, and $\delta$ for the rest of this paper. Let $\tilde{\Gamma }$ be the universal cover of $\Gamma$. By Theorem \ref{thm:simply-connected-nilpotent}, the map $\psi : \mathbb{R}^r \to \tilde{\Gamma }$ given by 
 \begin{equation}\label{eq:psi-def}
   \psi (x_1, \ldots, x_r   ) : =  \exp (\delta x_1 v_1) \ldots \exp (\delta x_r v_r)
\end{equation} 
is a diffeomorphism.

\begin{lemma}\label{lem:yiren}
\rm  The group structure $Q: \mathbb{R}^r \times \mathbb{R}^r \to \mathbb{R}^r$ given by 
\[ Q(x,y) := \psi^{-1} (  \psi (x) \psi (y) )  \]
is a quasilinear polynomial of degree $\leq d(r)$. We will denote the group $(\mathbb{R}^r, Q)$ as $H$.
\end{lemma}

\begin{proof}
By the Baker--Campbell--Hausdorff formula, after identifying $\mathfrak{g}$ with $\mathbb{R}^r$ via the basis $\{ v_1, \ldots , v_r \}$, the map $\mathbb{R}^r \times \mathbb{R}^r \to \mathbb{R}^r$ given by
 \[ (x,y) \to \log (  \psi (x) \psi (y)  )  \]
is polynomial of degree $\leq r$. Also, by Theorem \ref{thm:simply-connected-nilpotent}, the map $\mathbb{R}^r \to \mathbb{R}^r$ given by 
\[ x \to \psi ^{-1} ( \exp (x)  ) \]
is polynomial of degree bounded by a number depending only on $r$. Therefore the composition is also polynomial of degree $\leq d(r)$. Quasilinearity is immediate from the definition.
\end{proof}

By  Theorem \ref{thm:malcev-embedding}, for $\alpha$-large enough $n$, the nilprogressions $P_n$ are good with Malcev polynomials $\hat{Q}_n$ and define the groups $\tilde{\Gamma}_n : = \Gamma_{P_n} = (\mathbb{Z}^r , \hat{Q}_n\vert_{\mathbb{Z}^r \times \mathbb{Z}^r} )$.  Let $N_0\in \mathbb{N}$ be given by Lemma \ref{lem:well-finite} with $d (r)$ given by Lemma \ref{lem:yiren} and Theorem \ref{thm:malcev-embedding}, and define  $\xi : \mathbb{N}\to \mathbb{N}$ as
\begin{equation}\label{eq:xi-def}
\xi (n) : = N_0 \left\lfloor \frac{\delta n}{CN_0} \right\rfloor.  
\end{equation}   
For $n \in \mathbb{N}$, consider $  \kappa_n : \mathbb{R}^r \to \mathbb{R}^r  $ given by
\[ \kappa_n (x_1, \ldots , x_r) : = (x_1 \xi (N_{1,n}), \ldots ,  x_r \xi (N_{r,n} )) ,   \]
where $N_j = \{ N_{j,n } \}_n $ for $j \in \{ 1, \ldots , r \}$.  Let $H_n$ be the group $(\mathbb{R}^r , Q_n)$, where $Q_n : \mathbb{R}^r \times \mathbb{R}^r \to \mathbb{R}^r  $ is the group structure given by
\[  Q_n (x,y) : = \kappa_n ^{-1} (\hat{Q}_n ( \kappa_n(x) ,\kappa_n(y)) ) . \]

\begin{proposition}\label{prop:q-good}
\rm  The sequence of quasilinear polynomial group structures $Q_n$ converges well to $Q$. 
\end{proposition}

Define $\Omega  \subset \mathbb{R}^r$ as
\[ \Omega : = \left\{  -1, \ldots,\frac{-1}{N_0}, 0, \frac{1}{N_0}, \ldots , 1    \right\} ^{\times r} . \]
 Consider the maps $\omega_{\alpha} :  \Omega  \to \tilde{\Gamma }$ and $\omega_n :  \Omega  \to \tilde{\Gamma}_n$ defined as $\omega_{\alpha} : = \psi \vert _{ \Omega }$ and
 \begin{equation}\label{eq:omega-n-def}
   \omega_n ( x_1, \ldots, x_r) :=  \gamma_1^{ x_1 \xi (N_{1,n})  }\ldots   \gamma_r^{ x_r \xi ( N_{r,n})  } .
\end{equation} 
 We also define maps $\varphi^{\flat}_n : G(\varepsilon P_n )^{\sharp } \to \Gamma $ and $ \tilde{\varphi} ^{\flat }_n  : G(\varepsilon P_n)^{\sharp} \to \tilde{\Gamma }$ as 
\begin{equation}\label{eq:flat-def}
 \varphi^{\flat}_n  (  x^{\sharp } ) := \overline{\varphi }_n (x)  \text{ and } \tilde{\varphi}^{\flat}_n  (  x^{\sharp } ) := \tilde{\varphi } _n (x) . 
\end{equation}  
Consider the following diagram.

\begin{displaymath}
  \begin{tikzcd}[column sep=3em]
   \Omega \times \Omega \arrow{r}{ \omega_n } \arrow{d}{ \id} & (G (\delta P_n )^{\sharp})^{\times 2} \arrow{r}{\ast} \arrow{d}{ \tilde{\varphi}^{\flat}_n}&  G (\varepsilon P_n )^{\sharp} \arrow{d}{ \tilde{\varphi}^{\flat}_n } \arrow{r}{\kappa_n^{-1}}  & \mathbb{R}^r  \arrow{d}{\id} \\
  \Omega \times \Omega \arrow{r}{\omega_{\alpha} } & \tilde{\Gamma }\times \tilde{\Gamma }  \arrow{r}{\ast }& \tilde{\Gamma } \arrow{r}{ \psi^{-1} } &  \mathbb{R}^r  
  \end{tikzcd}
\end{displaymath}
The first row of the diagram is the polynomial $Q_n$, while the second row is the polynomial $Q$. Commutativity of the diagram does not hold in general, but it holds in the limit, as the following proposition (and its proof) shows. 
\begin{lemma}\label{lem:com-d}
\rm  For every $x,y \in \Omega$,
\[  \lim\limits_{n \to \alpha} \kappa_n^{-1} \left(   \omega_n(x) \omega_n(y)   \right)    =   \psi^{-1} (  \omega_{\alpha}(x)\omega_{\alpha}(y))  .\]
\end{lemma}

\begin{proof}
 If $x = (x_1, \ldots , x_r) \in \Omega$, one has 
\begin{equation}\label{eq:omegas}
\begin{split}
\omega_{\alpha }(x) & =  \exp (\delta x_1 v_1) \ldots \exp (\delta x_r v_r)\\
& =   \lim_{n \to \alpha}  \tilde{\varphi}^{\flat}_n \left( \gamma_1^{ x_1 \xi ( N_{1,n})   }  \right) \ldots \lim_{n \to \alpha}\tilde{\varphi}^{\flat}_n \left( \gamma_r^{ x_r \xi (  N_{r,n}) }  \right)\\
& =  \lim_{n \to \alpha}  \tilde{\varphi}^{\flat}_n (\omega_n(x)),
\end{split}
\end{equation}
where we used (\ref{eq:psi-def}) on the first line, (\ref{eq:malcev-basis}) and (\ref{eq:xi-def}) on the second one, and (\ref{eq:model-morphism}) and (\ref{eq:omega-n-def}) on the third one.  On the other hand, for any sequence $x_n^{\sharp} \in G(\varepsilon P_n)^{\sharp} \subset \tilde{\Gamma}$, we can decompose it as 
\[   x_n = x_{1,n}\cdots x_{r,n} ,  \]
with $x_{j,n} = u_{j,n}^{p_{j,n}} , \, \vert p_{j,n } \vert \leq \varepsilon N_{j,n} / C $ for each $j \in \{ 1, \ldots , r \}$. Then we have
\begin{equation}\label{eq:psi-kappa}
\begin{split}
 \lim\limits_{n \to \alpha} \tilde{\varphi}^{\flat}_n (x_n^{\sharp} ) & =  \lim\limits_{n \to \alpha} \tilde{\varphi}_n  (x_{1,n}) \ldots \lim\limits_{n \to \alpha} \tilde{\varphi}_n (x_{r,n})\\
 & =  \exp \left(  \lim\limits_{n \to \alpha} \dfrac{Cp_{1,n}}{ N_{1,n}} v_1  \right)\ldots \exp \left(  \lim\limits_{n \to \alpha} \dfrac{C p_{r,n}}{ N_{r,n}} v_r  \right) \\
 & =  \psi \left( \dfrac{C}{\delta} \lim\limits_{n \to \alpha} \left( \dfrac{p_{1,n}}{N_{1,n}}, \ldots , \dfrac{p_{r,n}}{N_{r,n}}   \right) \right) \\
 & =  \psi \left( \lim\limits_{n \to \alpha} \kappa_n^{-1} (  x_n ^{\sharp} ) \right) ,
\end{split}
\end{equation}
where we used (\ref{eq:model-morphism}) in the first line, (\ref{eq:malcev-basis}) on the second one, (\ref{eq:psi-def}) on the third one, and (\ref{eq:xi-def}) on the fourth one. Then we conclude
\begin{align*}
\omega_{\alpha}(x) \omega_{\alpha}(y) & =  \lim_{n \to \alpha}  \tilde{\varphi}^{\flat}_n (  \omega_n (x)  ) \lim_{n \to \alpha}  \tilde{\varphi}^{\flat}_n (  \omega_n (y)  ) \\
& =  \lim_{n \to \alpha}  \tilde{\varphi}^{\flat}_n (\omega_n (x) \omega_n (y))\\
& =  \psi \left(  \lim\limits_{n \to \alpha} \kappa_n^{-1} \left(   \omega_n(x) \omega_n(y)   \right)    \right),
\end{align*}
where we used (\ref{eq:omegas}) on the first line, (\ref{eq:model-morphism}) on the second one, and (\ref{eq:psi-kappa}) on the third one.
\end{proof}
\begin{proof}[Proof of Proposition \ref{prop:q-good}:]
Apply Lemmas \ref{lem:well-finite} and \ref{lem:com-d}.
\end{proof} 
 By Proposition \ref{prop:connected} and Theorem \ref{thm:nilprogression-construction}, there is $\rho > 0 $ such that 
\begin{equation}\label{eq:sigma-n}
 S_n : = \{ g \in \Gamma_n \vert d( g [p_n], [p_n] ) < \rho \} \subset G( \delta P_n )    
\end{equation}  
for $\alpha$-large enough $n$. Let $\tilde{\Gamma }_n^{\prime }$ be the abstract group generated by $S_n$, with relations
\begin{center}
$ s= s_1s_2 \in \tilde{\Gamma }_n^{\prime } $ whenever  $ s,s_1, s_2 \in S_n$ and $s=s_1s_2$  in $\Gamma_n$. 
\end{center}
From Theorem \ref{thm:malcev-embedding}, for $\alpha$-large enough $n$ we have $\tilde{\Gamma }_n^{\prime } = \tilde{\Gamma }_n$, and by Theorem \ref{thm:monodromy} there is a regular  $(\rho /4)$-wide covering map $\tilde{X}_n \to X_n/W_n$ whose Galois group is the kernel of the canonical map $\Phi_n : \tilde{ \Gamma}_n \to \Gamma _ n$.  From Proposition \ref{prop:gh-close} and Theorem \ref{thm:cover-copy} we also get the following result.
\begin{proposition}\label{prop:phi-is-quotient}
\rm For $\alpha$-large enough $n$, $\kker (\Phi_n)$ is a quotient of $\pi_1(X_n)$.
\end{proposition}
\begin{remark}\label{rem:phi-locally-injective}
\rm  From (\ref{eq:sigma-n}), if $n$ is $\alpha$-large enough, then for every $g \in \tilde{\Gamma}_n $ with  $d ( \Phi_n (g) [p_n] , [p_n]  )   < \rho ,$  there is a unique $w \in G( \delta P_n )^{\sharp}$ with $gw \in \kker (\Phi_n)$. In particular,  $\kker(\Phi_n ) \cap S_n ^{\sharp } = \{ e_{ \tilde{\Gamma}_n } \}$.
\end{remark}

\section{Almost torsion elements}\label{sec:dictionary}

In this last section we finish the proof of Theorem \ref{thm:part-iii} (and consequently Theorem \ref{thm:main-theorem}) with the following result.
\begin{proposition}\label{prop:phi-contains-pi1x}
\rm For $\alpha$-large enough $n$, $\kker ( \Phi_n )$ contains an isomorphic copy of $\pi_1(X)$. 
\end{proposition}

\begin{proof}
  Let $\Phi :  \tilde{ \Gamma } \to \Gamma $ denote the canonical projection. By Corollary \ref{cor:aspherical}, the group $\kker (\Phi ) = \pi_1( \Gamma ) \cong \pi_1(X)$  is finitely generated torsion free abelian. Let $\{ \lambda_1 , \ldots , \lambda_{\ell}   \} $ be a basis of $\kker (\Phi )$ as a free abelian group. Pick $M \in \mathbb{N}$ large enough so  that the $M$-th roots of the $\lambda_j$'s lie in the ball $B^{\tilde{\Gamma }}(e, \rho )$, with $\rho$ given by (\ref{eq:sigma-n}). For each $j \in \{ 1, \ldots , \ell \}$ pick a sequence
\[ \lambda _{j,n} \in G(\delta P_n)^{\sharp} \subset \tilde{\Gamma}_n \]
 with 
 \begin{equation}\label{eq:lambda-def}
  \lim\limits_{n \to \alpha} \tilde{\varphi}_n^{\flat} (\lambda_{j,n}) = \lambda_j ^{1/M}.
\end{equation}  
For each $j \in \{ 1, \ldots , \ell \}$ we have
 \begin{align*}
 \lim\limits_{n \to \alpha} \overline{\varphi}_n  \left( \Phi_n \left(  \lambda_{j,n} ^M  \right)  \right)  & =  \left( \lim\limits_{n \to \alpha} \overline{\varphi}_n \left( \Phi_n \left(  \lambda_{j,n}  \right)   \right)\right)^M\\
 & =   \left( \lim\limits_{n \to \alpha} \varphi_n^{\flat}  (   \lambda_{j,n} ) \right)^M   \\
 & =  \Phi \left(  \lambda_j ^{1/M} \right)^M\\
 & =  e_{\Gamma },  
 \end{align*}
where we used (\ref{eq:model-morphism}) in the first line, (\ref{eq:flat-def}) in the second one, and (\ref{eq:lambda-def}) in the third one.  By Remark \ref{rem:phi-locally-injective}, for $\alpha$-large enough $n$, and all $j \in \{ 1 , \ldots , \ell \}$, there are $w_{j,n}\in G( \delta P_n)^{\sharp}$  with  
 \[  \lim_{n \to \alpha} \tilde{\varphi}_n^{\flat}(w_{j,n}) = e_{\tilde{\Gamma}} \text{ and } \lambda_{j,n}^M w_{j,n} \in \kker (\Phi_n).\]
Consider the morphisms $ \natural : \tilde{\Gamma}_n \to H_n $ and $\natural : \tilde{\Gamma } \to H$ given by  $g^{\natural } : = \kappa_n^{-1}(g)$ for $g \in \tilde{\Gamma}_n$, and $g^{\natural}: = \psi^{-1}(g) $ for $g \in \tilde{\Gamma }$. These identifications allow us to have all these group structures in the same underlying set $\mathbb{R}^r$.  Since $Q_n$ converges well to $Q$, we deduce from (\ref{eq:lambda-def}) that
 \[   \lim\limits_{n \to \alpha} \left[ \lambda_{j,n} ^M w_{j,n}\right]^{\natural} =  \lambda_j^{\natural}  . \]
 Then by Lemma \ref{lem:log-continuous}, 
 \[  \lim\limits_{n \to \alpha} \log_n \left(  \left[ \lambda_{j,n} ^M w_{j,n}\right]^{\natural}   \right) = \log (\lambda_j^{\natural}) \text{ for each }j \in \{ 1, \ldots , \ell   \}  , \]
where $\log_n, \log,$ denote the logarithm maps with respect to $H_n$ and $H$, respectively. Therefore for $\alpha$-large enough $n$, the set
\[  \left\{ \log_n \left(  \left[ \lambda_{1,n} ^M w_{1,n}\right]^{\natural}   \right) , \ldots , \log_n \left(  \left[ \lambda_{\ell,n} ^M w_{\ell ,n}\right]^{\natural}   \right) \right\}    \]
is linearly independent. Also, for $j,k \in \{ 1, \ldots , \ell \}$, 
\begin{equation}\label{eq:commutator-almost-trivial}
\begin{split}
 \lim\limits_{n \to \alpha}  [ \lambda_{j,n}^M w_{j,i},\lambda_{k,n} ^M w_{k,i}   ]^{\natural}   &= \left[  \lim\limits_{n \to \alpha}\lambda_{j,n}^Mw_{j,n}, \lim\limits_{n \to \alpha}\lambda_{k,n}^Mw_{k,n}   \right]^{\natural}  \\
 & =  \left( \lambda_j^{-1} \lambda_k ^{-1}\lambda_j \lambda_k \right)^{\natural} \\
 & =  e_H.
 \end{split}
\end{equation}
Since $[ \lambda_{j,n}^M w_{j,i},\lambda_{k,n} ^M w_{k,i}   ] \in \kker (\Phi_n)  $, (\ref{eq:commutator-almost-trivial}) and Remark \ref{rem:phi-locally-injective} imply that  
\[  [ \lambda_{j,n}^M w_{j,n},\lambda_{k,n} ^M w_{k,n}   ]  =e_{\tilde{\Gamma}_n} \]
for $\alpha$-large enough $n$. Then by Lemma \ref{lem:li-com-free} the group 
\[   \langle   \lambda_{1,n} ^M w_{1,n}, \ldots ,  \lambda_{\ell,n} ^M w_{\ell  , n} \rangle \leq \kker \left( \Phi_n \right)      \]
is isomorphic to $\pi_1(X)$.
\end{proof}
\begin{proof}[Proof of Theorem \ref{thm:part-iii}:]
As stated at the beginning of Section \ref{sec:trans} we can assume, working by contradiction and  after passing to a subsequence, that for no $n$ the group $\pi_1(X_n)$ admits a quotient containing an isomorphic copy of $\pi_1(X)$. Combining this assumption with Propositions \ref{prop:phi-is-quotient} and \ref{prop:phi-contains-pi1x} yields a contraction.
\end{proof}


\end{document}